\numberwithin{equation}{section}
\theoremstyle{plain}
\newtheorem{thm}{Theorem}[section]
\newtheorem{cor}[thm]{Corollary}
\newtheorem{lemma}[thm]{Lemma}
\theoremstyle{remark}
\newtheorem{rem}[thm]{Remark}
\newtheorem{ex}[thm]{Example}
\theoremstyle{definition}
\newtheorem{defi}[thm]{Definition}
\def\BBB{\mathcal{B}}
\def\NN{\mathbb{N}}
\subjclass[2020]{primary: 05B35, 52B40, 13F65 
}
\newcommand\ignore[1]{}
\newcommand\CC{{\mathbb{C}}}
\newcommand\RR{{\mathbb{R}}}
\newcommand\QQ{{\mathbb{Q}}}
\def\operatorname#1{\mathop{\rm #1}\nolimits}
\def\rk{\operatorname{rk}}
\def\deg{\operatorname{deg}}
\newcommand{\pb}{\ar@{}[dr]|{\text{\pigpenfont J}}}
\newcommand{\xdasharrow}[2][->]{
\tikz[baseline=-\the\dimexpr\fontdimen22\textfont2\relax]{
\node[anchor=south,font=\scriptsize, inner ysep=1.5pt,outer xsep=2.2pt](x){#2};
\draw[shorten <=3.4pt,shorten >=3.4pt,dashed,#1](x.south west)--(x.south east);
}}
\begin{document}
\title{White's conjecture for matroids and inner projections}


\author[Han]{Kangjin Han}
\address{ 
School of Undergraduate Studies, Daegu-Gyeongbuk Institute of Science \& Technology (DGIST),
Daegu 42988, Republic of Korea}
\email{kjhan@dgist.ac.kr}

\author[Micha{\l}ek]{Mateusz Micha{\l}ek}
\address{
	University of Konstanz, Germany, Fachbereich Mathematik und Statistik, Fach D 197
	D-78457 Konstanz, Germany
}
\email{mateusz.michalek@uni-konstanz.de}

\author[Weigert]{Julian Weigert}
\address{Max Planck Institute for Mathematics in the Sciences/University of Leipzig, Leipzig, Germany
}
\email{julian.weigert@mis.mpg.de}

\begin{abstract}
White's conjecture predicts quadratic generators for the ideal of any matroid base polytope. We prove that White's conjecture for any matroid $M$ implies it also for any matroid $M'$, where $M$ and $M'$ differ by one basis. Our study is motivated by inner projections of algebraic varieties. 
\end{abstract}

\thanks{K.H. is supported by a National Research Foundation of Korea (NRF) grant (MSIT no. RS-2024-00414849) and DGIST Global Visiting Research Program. MM is supported by the DFG grant 467575307}
\maketitle

\section{Introduction}

Matroids are central objects in combinatorics. 
They generalize the notion of independence, appearing in various mathematical disciplines such as graph theory and linear algebra.
Matroids are well-known for having many equivalent cryptomorphic definitions. For the purpose of this article, we recall the definition using circuits, for which the reader unfamiliar with matroids should imagine cycles in a graph.

A \emph{matroid} on a finite set $E$ is a collection $\mathcal{C}\subseteq 2^E$ of nonempty subsets of $E$ called \emph{circuits} satisfying the following conditions for any distinct $c_1,c_2\in \mathcal{C}$:
\begin{itemize}
\item  $c_1\nsubseteq c_2$
\item if $e\in c_1\cap c_2$ then there exists $c\in \mathcal{C}$ such that $c\subset c_1\cup c_2\setminus\{e\}$.
\end{itemize}
For readers familiar with a different definition of a matroid, we recall that circuits are inclusion minimal dependent subsets. Bases are inclusion maximal subsets not containing any circuit. Let $\BBB(M)$ be the set of all bases of $M$.
For more information about matroids we refer to \cite{oxley2006matroid} and for a brief introduction to matroids to \cite[Chapter 13.1]{jaBernd}.

Matroids exhibit many relations to various branches of mathematics, including algebra, geometry and optimization. 
In this article we focus on White's conjecture, which is one of the central open problems in matroid theory. Given a field $k$ and a matroid $M$ on the set $E$ we consider a morphism of polynomial rings:
\begin{equation}\label{f map}
    f_M:k[y_b]_{b\in \BBB(M)}\rightarrow k[x_e]_{e\in E}~,
\end{equation}
defined by $f_M(y_b):=\prod_{e\in b} x_e$. The \emph{toric ideal $I_M$ of a matroid} $M$ is defined to be the kernel of $f_M$. The weak version of White's conjecture \cite{white1980unique} states that $I_M$ is generated by quadrics. Its strong version predicts explicit generators, namely binomials $y_{B_1}y_{B_2}-y_{B_1\cup\{e_1\}\setminus\{e_2\}}y_{B_2'\cup\{e_2\}\setminus \{e_1\}}$, i.e.~binomials corresponding to symmetric basis exchanges. White also posed a version of the conjecture for noncommutative rings, however we do not consider it in the present article.

All these conjectures remain open in general. Partial results confirm (the strong) White's conjecture e.g.~for graphic \cite{blasiak2008toric}, sparse paving  \cite{BONIN20136} and strongly base orderable \cite{lason2014toric}. Further there exist results confirming it 'up to saturation' \cite{lason2014toric} or showing equivalence of strong and weak conjecture for special classes like regular \cite{berczi2024reconfiguration} or split \cite{berczi2024exchange} matroids. A bound on the degree of Gr\"obner basis elements, in terms of the rank of a matroid, was provided in \cite{lason2021toric}.

From the point of view of algebraic geometry White's conjecture is particularly interesting for representable matroids. Indeed by the groundbreaking work \cite{gelfand1987combinatorial} we know that each (Cartan) torus orbit closure in a Grassmannian, say over $\CC$, is exactly defined by $I_M$ for a matroid $M$ representable over $\CC$. Even in this very classical, geometric case, White's conjecture remains open.

In the work \cite{berczi2024reconfiguration} Seymour's decomposition theorem for regular matroids plays a central role. From this perspective it is natural to ask which modifications of matroids preserve White's conjecture. Certain operations trivially have this property, like taking the dual or direct sum. 
This paper establishes that White’s conjecture is preserved under single-basis modifications and provides a novel proof technique of the conjecture for sparse paving matroids.
To make the statement precise we make the following definition.
\begin{defi}
    Let $M$ be a matroid on a ground set $E$ with the set of bases $\BBB{(M)}$. For a $b\in \BBB{(M)}$, let us assume that $\BBB{(M)}\setminus\{b\}$ is the set of bases of a matroid on $E$. In such a case, we denote the new matroid by $M_b$.
\end{defi}
Our main result is as follows.
\begin{thm}\label{main_thm}
    Assume that $M$ and $M_b$ are matroids. Then the strong White's conjecture holds for $M$ if and only if it holds for $M_b$.
\end{thm}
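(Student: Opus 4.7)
The plan is to reduce the theorem to the algebraic identity
\[
J_{M_b} \;=\; J_M \cap R',
\]
where $R = k[y_c]_{c \in \BBB(M)}$, $R' = k[y_c]_{c \in \BBB(M_b)} \subset R$ is the subring obtained by omitting the single variable $y_b$, and $J_M$, $J_{M_b}$ denote the ideals generated by symmetric exchange binomials in their respective rings. The containment $J_{M_b} \subseteq J_M \cap R'$ is immediate, since every symmetric exchange quadric in $M_b$ is also an exchange quadric in $M$ avoiding $y_b$. Together with the elementary fact $I_{M_b} = I_M \cap R'$ (which holds because $f_{M_b}$ is simply the restriction of $f_M$ to $R'$), the key identity delivers both directions simultaneously:
\[
I_M = J_M \ \iff\ I_M \cap R' = J_M \cap R' \ \iff\ I_{M_b} = J_{M_b}.
\]

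For the direction $(M_b \Rightarrow M)$, I would show $I_M \subseteq J_M$ by a combinatorial reduction applied to any binomial $f = y^\alpha - y^\beta \in I_M$. Whenever $b$ appears in $\alpha$ with positive multiplicity alongside some basis $A \neq b$ (the corner case $\alpha = k\cdot b$ forces $\beta = k\cdot b$ and $f = 0$ by an element-multiset argument), the symmetric exchange axiom of $M$ supplies an exchange quadric $y_b y_A - y_C y_D \in J_M$ with $C, D \in \BBB(M_b)$. Substituting modulo $J_M$ replaces $y_b y_A$ by $y_C y_D$ and lowers the $b$-multiplicity of $y^\alpha$ by one. Iterating on both $\alpha$ and $\beta$ produces a binomial $f' \equiv f \pmod{J_M}$ with $f' \in I_M \cap R' = I_{M_b}$; by hypothesis $f' \in J_{M_b} \subseteq J_M$, whence $f \in J_M$.

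The crux is the reverse inclusion $J_M \cap R' \subseteq J_{M_b}$. Given $g \in J_M \cap R'$ with a presentation $g = \sum_\alpha h_\alpha q_\alpha$, I would apply the substitution $\phi\colon y_b \mapsto 0$: each type-$1$ quadric $q_\alpha \in R'$ satisfies $\phi(q_\alpha) = q_\alpha \in J_{M_b}$, while each type-$2$ quadric $q_\alpha = y_b y_{A_\alpha} - m_\alpha$ satisfies $\phi(q_\alpha) = -m_\alpha$. Hence $g = \phi(g)$ equals a manifest $J_{M_b}$-part minus $\sum \phi(h_\alpha) m_\alpha$ summed over type-$2$ quadrics, and the task reduces to placing this correction in $J_{M_b}$. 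In the degree-$2$ case (coefficients $h_\alpha \in k$), vanishing of the $y_b$-coefficient of $g$ forces $\sum_{A_\alpha = A} h_\alpha = 0$ for each basis $A$, so the correction becomes a combination of differences $m_\alpha - m_{\alpha'}$ arising from two exchanges sharing the same $A$. The heart of the matter is therefore the combinatorial claim: for any two valid symmetric exchanges $(e_1, e_2)$ and $(e'_1, e'_2)$ between $b$ and a fixed basis $A$ of $M$, the binomial $y_Cy_D - y_{C'}y_{D'}$ lies in $J_{M_b}$. A case analysis (exchanges sharing $e_1$, sharing $e_2$, or sharing neither) exhibits this binomial as a telescoping sum of single symmetric exchange quadrics in $M_b$, connecting the two pairs through intermediate pairs in the bipartite graph of valid exchanges between $b$ and $A$. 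Higher degrees should follow by induction on the $y_b$-degree of the presentation.

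The main obstacle is ensuring that the intermediate exchange pairs used in the telescoping avoid the forbidden basis $b$: it is precisely here that the hypothesis ``$M_b$ is a matroid'' does the essential work, providing the needed intermediate bases through its own exchange axiom. The inner-projection viewpoint indicated in the introduction offers a complementary geometric route, identifying $X_{M_b}$ as the inner projection of $X_M$ from the torus-fixed coordinate point $e_b$; standard results on the behavior of quadratic generators under inner projections could conceivably be invoked in place of the explicit combinatorial argument, at least for the difficult direction.
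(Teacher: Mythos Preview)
Your degree-two reduction of $J_M\cap R'\subseteq J_{M_b}$ is correct and lands exactly on the paper's Lemma~\ref{quadraticcase}: the ``combinatorial claim'' that two symmetric exchanges out of the same pair $\{b,A\}$ are connected by a chain in $M_b$ is precisely what is proved there, by the same case split on $C_{b_2',y}$ versus $C_{b_2'',y}$. The sentence ``Higher degrees should follow by induction on the $y_b$-degree of the presentation,'' however, is where your argument stops and the paper's begins. Once the coefficients $h_\alpha$ themselves carry powers of $y_b$, the substitution $\phi$ gives no control over the correction term, and one is forced back to manipulating chains of exchanges directly. The paper spends Lemmas~\ref{typebsimple} and~\ref{removingb} and the double induction in Theorem~\ref{maintheorem} on exactly this: consecutive occurrences of $b$ in a chain (``type~$b$'' relations $bb_1b_2=bb_1'b_2'$) must first be rerouted through triples containing a single $b$, then isolated occurrences must be bypassed, with separate case analyses at each stage. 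None of this is a formal consequence of the quadratic case.

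There is also a gap in your $(M_b\Rightarrow M)$ step. You claim that for $A\neq b$ the exchange axiom supplies $y_by_A-y_Cy_D\in J_M$ with $C,D\in\BBB(M_b)$; this fails whenever $|A\setminus b|=1$, since then the only symmetric exchange between $b$ and $A$ returns the unordered pair $\{b,A\}$. What is needed is that among the bases appearing in $\alpha$ one can always \emph{find} an $A$ with $|A\setminus b|\ge 2$, and this is not automatic. The paper supplies the missing piece by a global counting argument (proof of Theorem~\ref{thm:main2}): setting $\deg(c)=|c\setminus b|$, the element-multiset on the $b$-free side has total degree at least $k$, whereas a tuple in which every entry is $b$ or a neighbour of $b$ has total degree at most $k-1$; hence such a stuck configuration is impossible as long as one copy of $b$ remains. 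With that observation your local replacement does terminate and the direction goes through, but as written the step is unjustified. (Relatedly, the displayed chain of equivalences overreaches: the identity $J_{M_b}=J_M\cap R'$ yields only $I_M=J_M\Rightarrow I_{M_b}=J_{M_b}$; the converse is not an intersection statement and genuinely needs the separate argument.)
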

As a corollary, we obtain an easy proof of Bonin's result stating  that sparse paving matroids satisfy the strong version of White's conjecture \cite{BONIN20136} (see Corollary \ref{thm_Bonin}).  

Finally, we would like to remark that our main result  is motivated by the following geometric consideration. Theorem \ref{main_thm} is naturally related to an `inner projection' of the algebraic variety defined by the toric ideal $I_M$. Here, by an inner projection we mean a linear projection of a variety $V$ from an internal point of $V$. From (\ref{f map}), we have a commutative diagram as follows:
\begin{equation}\label{inner_diagram}
\begin{tikzcd}
f: k[y_{b'}]_{b'\in\BBB} \ar[r] & k[x_e]_{e\in E} \\
f_b: k[y_{b'}]_{b'\in\BBB\setminus \{b\}} \ar[r]\ar[hookrightarrow]{u}&  k[x_e]_{e\in E}\ar[equal]{u}
\end{tikzcd}.
\end{equation}
By the diagram (\ref{inner_diagram}) we can see that $I_{M_b}$, the toric ideal for $M_b$, is the elimination ideal $I_M \cap k[\{y_{b'}\}\setminus y_b]$. Thus, $I_{M_b}$ can be interpreted as the ideal of the projection of $V(I_M)$, the projective variety defined by $I_M$, from a point corresponding to $y_b$. 
In coordinates, the point from which we project is given by $y_b=1$ and $y_{b'}=0$ for any $b'\neq b$. It is easy to see that this point  corresponds to an \textit{internal} point of $V(I_M)$.
See \cite{han_kwak12} for general results on some algebraic and homological invariants of inner projections.

\section*{Acknowledgements}
We would like to thank Micha{\l} Laso{\'n} and Alex Fink for interesting discussions on the topic. Mateusz Micha{\l}ek thanks the Institute for Advanced Study for a great working environment and support through the Charles Simonyi Endowment.

\section{Main Results}
\begin{defi}
Let $M=(E,\BBB(M))$ be a matroid. For a basis $b\in \BBB(M)$ and an element $x\in E\setminus b$, we denote by $C_{b,x}$ the unique circuit contained in $b\cup x$. 
\end{defi}
The elements of $C_{b,x}$ other than $x$ are precisely those for which we can exchange $x$ in $b$, that is for any $y \in b$ \begin{align}
\label{fundamental_circuits_exchange}
    b\setminus y \cup x \in \BBB(M) \Leftrightarrow y \in C_{b,x}.
\end{align}

\begin{rem}
We note that if a matroid has a loop or a coloop we may delete it or contract it. The set of bases of the matroid and obtained minor can be canonically identified. The statements like White's conjecture either hold or fail for both matroids simultaneously under such identification.

For this reason, from now on we will assume that our matroid does not contain loops or coloops. 
\end{rem}

We will be using the following very special properties of matroid base polytopes.
\begin{lemma}\cite[Section 4]{gelfand1987combinatorial}\label{lem:edge=sym}
    A lattice polytope $P$ is a matroid base polytope if and only if it is a subpolytope of a cube $[0,1]^n\subset \RR^n$ and each edge is a shift of a vector $e_i-e_j$ for some $i,j\in \{1,\dots,n\}$, where $e_k$ form the standard basis of $\RR^n$. 

    In such a case edges of $P_M$ correspond exactly to symmetric basis exchanges.
\end{lemma}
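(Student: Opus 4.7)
The plan is to prove the two implications separately, then extract the edge–exchange correspondence as a corollary of the forward argument.

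For the forward direction, assume $P = P_M$ is the matroid base polytope, i.e.\ the convex hull of $\{e_B : B \in \BBB(M)\} \subseteq \{0,1\}^n$, so $P \subseteq [0,1]^n$ is immediate. For the edge condition, take any edge $E = [e_{B_1}, e_{B_2}]$ of $P$ and show $|B_1 \triangle B_2| = 2$. The symmetric basis exchange axiom yields, for any $i \in B_1 \setminus B_2$, some $j \in B_2 \setminus B_1$ with $B_1' := (B_1 \setminus \{i\}) \cup \{j\}$ and $B_2' := (B_2 \setminus \{j\}) \cup \{i\}$ both bases. If $|B_1 \triangle B_2| \geq 4$, then the pair $\{B_1', B_2'\}$ is disjoint from $\{B_1, B_2\}$. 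However, $e_{B_1'} + e_{B_2'} = e_{B_1} + e_{B_2}$, so the midpoints of $[e_{B_1}, e_{B_2}]$ and $[e_{B_1'}, e_{B_2'}]$ coincide. Any supporting functional maximized exactly on the edge $E$ would then also attain its maximum at $e_{B_1'}$ and $e_{B_2'}$, contradicting that $E$ is a face with vertex set $\{e_{B_1}, e_{B_2}\}$. Hence $|B_1 \triangle B_2|=2$, forcing the edge to be a shift of some $e_i - e_j$.

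For the converse direction, let $\BBB$ denote the vertex set of $P$, viewed as subsets of $[n]$. All vertices have the same cardinality $r$, because the edge-graph of $P$ is connected and each edge direction $e_i - e_j$ preserves $\sum_k x_k$. It remains to verify the symmetric basis exchange axiom. Fix $B_1, B_2 \in \BBB$ and $i \in B_1 \setminus B_2$; the goal is to produce $j \in B_2 \setminus B_1$ with $(B_1 \setminus \{i\}) \cup \{j\} \in \BBB$. The tangent cone of $P$ at $e_{B_1}$ is generated by the local edge directions $e_a - e_b$, each with $b \in B_1$ and $a \notin B_1$. Since $e_{B_2} - e_{B_1}$ lies in this cone and its $i$-coordinate equals $-1$, at least one such generator must satisfy $b = i$; combining this with the constraint coming from coordinates in $B_2 \setminus B_1$ (where the difference is $+1$) forces one such generator to also have $a \in B_2 \setminus B_1$, giving the required exchange.

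The final claim, that edges correspond exactly to symmetric basis exchanges, then follows from both directions together. The forward part already shows each edge of $P_M$ arises from a symmetric exchange. Conversely, if $B_1, B_2 \in \BBB(M)$ satisfy $|B_1 \triangle B_2| = 2$, say $B_2 = (B_1 \setminus \{i\}) \cup \{j\}$, then the midpoint $(e_{B_1} + e_{B_2})/2$ has $r-1$ coordinates equal to $1$, two equal to $1/2$, and the rest equal to $0$. Its $0$- and $1$-coordinates force every vertex in any convex combination representing it to be either $e_{B_1}$ or $e_{B_2}$, so $[e_{B_1}, e_{B_2}]$ is a one-dimensional face. The main obstacle is making the converse rigorous — specifically, extracting from the tangent-cone decomposition an edge that simultaneously removes the prescribed $i$ and adds an element of $B_2 \setminus B_1$; a careful analysis of the cone generators, or equivalently an inductive walk along the edge-graph toward $e_{B_2}$, is needed, and this is the core content of Gelfand–Serganova's theorem.
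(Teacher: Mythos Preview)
The paper does not give its own proof of this lemma; it is quoted from \cite{gelfand1987combinatorial} without argument, so there is nothing to compare against. Your proposal therefore has to stand on its own, and it mostly does.

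Your forward direction and your argument that every symmetric exchange yields an edge are both complete and correct. For the converse you set up the right tangent-cone argument and then flag the step of forcing the new element into $B_2\setminus B_1$ as an obstacle deferred to Gelfand--Serganova. In fact that step is immediate from what you already wrote and you should just finish it: in a conic decomposition
\[
e_{B_2}-e_{B_1}=\sum_k c_k\,(e_{a_k}-e_{b_k}),\qquad c_k>0,\ b_k\in B_1,\ a_k\notin B_1,
\]
look at any coordinate $a\notin B_1\cup B_2$. The left side has $a$-coordinate $0$, while the right side contributes $\sum_{k:\,a_k=a}c_k\ge 0$ (since $a\neq b_k$ for all $k$). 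Hence no $a_k$ lies outside $B_1\cup B_2$, so every $a_k\in B_2\setminus B_1$. Combined with your observation that some $b_k=i$, the vertex $(B_1\setminus\{i\})\cup\{a_k\}$ is the required exchange. So the ``main obstacle'' you identify is not an obstacle at all, and your sketch becomes a full proof once you add this one-line coordinate check.
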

\begin{defi}
    Let $M=(E,\BBB(M))$ be a matroid with matroid base polytope $P_M$. Let $b\in \BBB(M)$ and $v_b\in P_M$ be the corresponding vertex. We define $P_{M_b}$ to be the lattice polytope that is the convex hull of all vertices of $P$ distinct from $v_b$. We note that $P_{M_b}$ may, but does not have to be a matroid base polytope. 
\end{defi}

\begin{cor}\label{cor:PMb=PMcapH-}
     Let $M=(E,\BBB(M))$ be a matroid with matroid base polytope $P_M$. Let $b\in \BBB(M)$. Let $H_b$ be the hyperplane $\sum_{i\in b} x_i=(\rk M)-1$ and let $H_b^-$ be the half space defined by $\sum_{i\in b} x_i\leq (\rk M)-1$. Then:
     \begin{enumerate}
         \item Each vertex adjacent to $v_b$ belongs to $H_b$.
         \item $P_{M_b}=P_M\cap H_b^-$. 
     \end{enumerate}
     
\end{cor}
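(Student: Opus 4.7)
The plan is to first observe that the vertex $v_b$ satisfies $\sum_{i\in b}(v_b)_i=|b|=\operatorname{rk}(M)$, while every other vertex $v_{b'}$ of $P_M$ satisfies
\[
\sum_{i\in b}(v_{b'})_i \;=\; |b\cap b'| \;\le\; \operatorname{rk}(M)-1,
\]
since $|b'|=|b|=\operatorname{rk}(M)$ and $b'\neq b$. So $v_b$ is the unique vertex of $P_M$ lying strictly on the positive side of $H_b$, and all remaining vertices lie in $H_b^-$.

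For part (1), I would invoke Lemma \ref{lem:edge=sym}: every edge of $P_M$ is a translate of some $e_i-e_j$, and edges incident to $v_b$ correspond to symmetric basis exchanges $b \leadsto b' = b\setminus\{y\}\cup\{x\}$ with $y\in b$ and $x\notin b$. Then $v_{b'}=v_b-e_y+e_x$, so
\[
\sum_{i\in b}(v_{b'})_i \;=\; \operatorname{rk}(M)-1,
\]
which shows $v_{b'}\in H_b$.

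For part (2), the inclusion $P_{M_b}\subseteq P_M\cap H_b^-$ is immediate from the first paragraph: $P_{M_b}$ is the convex hull of the vertices of $P_M$ distinct from $v_b$, and each of these lies in $H_b^-$ (which contains $P_M\cap H_b^-$ trivially as a subset of $P_M$). For the reverse inclusion, I would argue that the vertices of the polytope $P_M\cap H_b^-$ are either (a) vertices of $P_M$ that lie in $H_b^-$, which are exactly the vertices other than $v_b$ and hence already vertices of $P_{M_b}$, or (b) points of the form $e\cap H_b$ where $e$ is an edge of $P_M$ with one endpoint in $H_b^-$ and the other strictly on the positive side. Since $v_b$ is the only vertex on the positive side, any such edge must have $v_b$ as an endpoint; but by part (1) the other endpoint of such an edge already lies in $H_b$, so the intersection $e\cap H_b$ is just that endpoint, which is again a vertex of $P_{M_b}$. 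Hence every vertex of $P_M\cap H_b^-$ is a vertex of $P_{M_b}$, yielding $P_M\cap H_b^-\subseteq P_{M_b}$.

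I do not anticipate a genuine obstacle here; the only point requiring care is the edge-intersection argument in part (2), which relies crucially on part (1) to guarantee that cutting $P_M$ by $H_b$ does not create any new vertices beyond those of $P_{M_b}$. The whole corollary is essentially a geometric translation of the combinatorial identity $|b\cap b'|\le \operatorname{rk}(M)-1$ for $b'\neq b$, combined with Lemma \ref{lem:edge=sym}.
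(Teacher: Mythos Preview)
Your proof is correct and follows essentially the same approach as the paper: both use Lemma~\ref{lem:edge=sym} for part~(1), and for part~(2) both argue that any vertex of $P_M\cap H_b^-$ is either already a vertex of $P_M$ distinct from $v_b$, or arises as the intersection of $H_b$ with an edge through $v_b$, which by part~(1) is again an existing vertex. Your version is slightly more explicit in spelling out the inequality $|b\cap b'|\le \operatorname{rk}(M)-1$ and in identifying $v_b$ as the unique vertex strictly outside $H_b^-$, but the structure of the argument is identical.
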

\begin{proof}
    By Lemma \ref{lem:edge=sym} each vertex adjacet to $v_b$ is given by a basis that differs from $b$ on exactly one element, thus it satisfies the defining equation of $H_b$. The first point follows.

    For the second point the inclusion $P_{M_b}\subset P_M\cap H_b^-$ is obvious. For the other inclusion we prove that every vertex of $P_M\cap H_b^-$ belongs to $P_{M_b}$. Pick such a vertex $w$. If $w\not \in H_b$ then $w$ is a vertex of $P_M$, necessarily distinct from $v_b$, and hence in $P_{M_b}$. If $w\in H_b$, then in general there are two possibilities. Either $w$ belonged to an edge $e$ of $P_M$ or was a vertex of $P_M$. In the latter case we see $w\in P_{M_b}$, thus we only have to exclude the case $w=e\cap H_b$. Indeed, in this case the edge $e$ would need to contain $v_b$. By the first point, the other vertex of $e$ would belong to $H_b$, and thus $w$ would be a vertex of $P_M$. This finishes the proof. 
\end{proof}
\begin{ex}
 Consider the uniform matroid $M=U_{2,4}$. Its base polytope is a regular 3-dimensional octahedron in $\RR^4$. Let $b\in \BBB(M)$ be any basis then we may remove $b$ from $\BBB(M)$ to get a matroid with base polytope given by a pyramid with square base. This new matroid has five bases, say $\{1,2\},\{1,3\},\{1,4\},\{2,3\},\{2,4\}$, and we removed $b=\{3,4\}$. The four bases that are not $\{1,2\}$ cannot be removed if we want to stay in the class of matroid base polytopes. Indeed if $c$ is such a basis then intersecting with the half space $H_c^{-}$ will introduce a new edge which is a diagonal in the square base of the pyramid. This edge is not in the direction of the difference of two standard basis vectors, so $P_{M_c}$ is not a matroid base polytope. On the other hand if $c=\{1,2\}$ then intersecting with $H_c^{-}$ leaves us with a square which is the base polytope of the direct sum of two copies of $U_{1,2}$.  
 \begin{figure}[h]
 \begin{tikzpicture}[thick,scale=3]
    \coordinate (A1) at (0,0);
    \coordinate (A2) at (0.6,0.2);
    \coordinate (A3) at (1,0);
    \coordinate (A4) at (0.4,-0.2);
    \coordinate (B1) at (0.5,0.5);
    \coordinate (B2) at (0.5,-0.5);

    \begin{scope}[thick,dashed,opacity=0.6]
        \draw (A1) -- (A2) -- (A3);
        \draw (B1) -- (A2) -- (B2);
    \end{scope}

    \draw[solid][line width=2pt] (A1) -- (A4) -- (B1);
    \draw[solid][line width=2pt] (A1) -- (A4) -- (B2);
    \draw[solid][line width=2pt] (A3) -- (A4) -- (B1);
    \draw[solid][line width=2pt] (A3) -- (A4) -- (B2);
    \draw[solid][line width=2pt] (B1) -- (A1) -- (B2) -- (A3) --cycle;
\end{tikzpicture}
 \begin{tikzpicture}[thick,scale=3]
    \coordinate (A1) at (0,0);
    \coordinate (A2) at (0.6,0.2);
    \coordinate (A3) at (1,0);
    \coordinate (A4) at (0.4,-0.2);
    \coordinate (B2) at (0.5,-0.5);

    \begin{scope}[thick,dashed,opacity=0.6]
        \draw (A2) -- (B2);
    \end{scope}

    \draw[solid][line width=2pt] (A1) -- (A2) -- (A3);
    \draw[solid][line width=2pt] (A1) -- (A4) -- (B2);
    \draw[solid][line width=2pt] (A3) -- (A4);
    \draw[solid][line width=2pt] (A3) -- (A4) -- (B2);
    \draw[solid][line width=2pt] (A1) -- (B2) -- (A3);
\end{tikzpicture}
 \begin{tikzpicture}[thick,scale=3]
    \coordinate (A1) at (0,0);
    \coordinate (A2) at (0.6,0.2);
    \coordinate (A3) at (1,0);
    \coordinate (A4) at (0.4,-0.2);

    \draw[solid][line width=2pt] (A1) -- (A2) -- (A3) -- (A4) -- cycle;
\end{tikzpicture}
 \begin{tikzpicture}[thick,scale=3]
    \coordinate (A2) at (0.6,0.2);
    \coordinate (A3) at (1,0);
    \coordinate (A4) at (0.4,-0.2);
    \coordinate (B2) at (0.5,-0.5);

    \begin{scope}[thick,dashed,opacity=0.6]
        \draw (A2) -- (B2);
    \end{scope}

    \draw[solid][line width=2pt] (A2) -- (A3);
    \draw[solid][line width=2pt] (A4) -- (B2);
    \draw[solid][line width=2pt] (A3) -- (A4);
    \draw[solid][line width=2pt] (A3) -- (A4) -- (B2);
    \draw[solid][line width=2pt] (B2) -- (A3);
    \draw[solid][line width=2pt][red] (A2) --(A4);
\end{tikzpicture}
\caption{The base polytope of $U_{2,4}$ and its subpolytopes}
\end{figure}
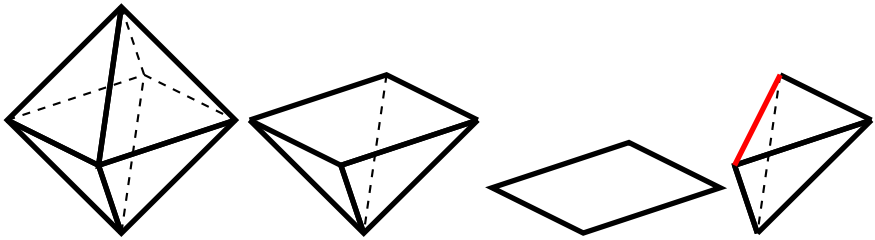
\end{ex}

In the following lemma we will show exactly under which condition the polytope $P_{M_b}$ is still the base polytope of a matroid.

\begin{lemma}\label{lem:whenwemayremoveb}
Let $M=(E,\BBB(M))$ be a matroid. Let $b\in \BBB(M)$. The set $\BBB(M)\setminus \{b\}$ is the set of bases of a matroid if and only if for any $p\in E\setminus b$ and $q\in b$ we have $b\cup p\setminus q\in \BBB(M)$. 
\end{lemma}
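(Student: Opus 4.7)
The plan is to handle both implications by directly manipulating the basis exchange axiom, using the standing hypothesis that $M$ has no loops or coloops to guarantee that enough auxiliary bases exist near $b$. No geometric or polytopal input beyond what is already in Section~2 will be needed.

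For the $(\Leftarrow)$ direction I would verify that $\BBB(M)\setminus\{b\}$ satisfies basis exchange. Given $b_1,b_2\in \BBB(M)\setminus\{b\}$ and $x\in b_1\setminus b_2$, ordinary exchange in $M$ supplies some $y\in b_2\setminus b_1$ with $(b_1\setminus x)\cup y\in \BBB(M)$, and if this set differs from $b$ we are done. In the exceptional case $(b_1\setminus x)\cup y=b$, a cardinality count forces $x\notin b$, $y\in b$, and $b_1\setminus x=b\setminus y$. Since $b_2\ne b$ but $|b_2|=|b|$, I can pick $y'\in b_2\setminus b$, and short set-theoretic checks confirm $y'\in b_2\setminus b_1$ (using $y'\ne x$ because $x\notin b_2$). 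Invoking the hypothesis with $(p,q)=(y',y)$ then gives $(b_1\setminus x)\cup y'=b\cup y'\setminus y\in \BBB(M)$, a basis automatically distinct from $b$.

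The $(\Rightarrow)$ direction I would prove by contradiction. Assume $M_b$ is a matroid yet $b\cup p\setminus q\notin \BBB(M)$ for some $p\in E\setminus b$, $q\in b$; equivalently, $q\notin C_{b,p}$ by~(\ref{fundamental_circuits_exchange}). The idea is to produce two bases $b_1,b_2\in \BBB(M_b)$ whose symmetric difference is exactly a four-element set $\{p,q,p',q'\}$, chosen so that the basis exchange axiom for $M_b$ is trapped between two impossible outcomes. For $b_1$, use that $p$ is not a loop to obtain $|C_{b,p}|\ge 2$, pick $q'\in C_{b,p}\cap b$ (necessarily $q'\ne q$ by the assumption $q\notin C_{b,p}$), and set $b_1:=b\cup p\setminus q'$; this lies in $\BBB(M_b)$ by~(\ref{fundamental_circuits_exchange}). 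For $b_2$, use that $q$ is not a coloop together with exchange in $M$ between $b$ and any basis avoiding $q$ to obtain $p'\in E\setminus b$ with $b\cup p'\setminus q\in \BBB(M)$; the contradiction hypothesis forces $p'\ne p$, so I set $b_2:=b\cup p'\setminus q\in \BBB(M_b)$.

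A direct computation then gives $b_1\setminus b_2=\{p,q\}$ and $b_2\setminus b_1=\{p',q'\}$ with all four elements distinct. Applying basis exchange in the matroid $M_b$ to the pair $(b_2,b_1)$ and distinguished element $p'\in b_2\setminus b_1$ produces some $y\in\{p,q\}$ for which $(b_2\setminus p')\cup y\in \BBB(M_b)$; but $y=p$ yields $b\cup p\setminus q$, excluded by the contradiction hypothesis, whereas $y=q$ yields $b$ itself, excluded by the definition of $M_b$. This closes both escape routes and gives the contradiction. The principal obstacle is the construction of $b_1$ and $b_2$: both depend essentially on $M$ being loopless and coloopless, and it is the precise four-element shape of $b_1\triangle b_2$ that forces the exchange axiom into the two forbidden alternatives.
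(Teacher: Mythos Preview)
Your argument is correct in both directions, and the $\Rightarrow$ half is essentially the paper's own proof in contrapositive form: the paper phrases it as a positive ``rectangle'' claim (if $b\setminus x_1\cup y_1$ and $b\setminus x_2\cup y_2$ are bases of $M_b$ then so are the two cross-combinations), proved by noting that a symmetric exchange among these two neighbours of $b$ cannot output $b$; your $b_1,b_2$ are exactly such a pair, and your trapped exchange is the same mechanism read as a contradiction.

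The $\Leftarrow$ direction, however, is genuinely different. The paper argues polytopally: it invokes Corollary~\ref{cor:PMb=PMcapH-} to write $P_{M_b}=P_M\cap H_b^-$, observes that a half-space cut can only create new edges inside $H_b$, and then uses the hypothesis to rule out any such new edge (a putative diagonal would be bisected by the midpoint of two other vertices). This yields the stronger geometric statement that every edge of $P_{M_b}$ is already an edge of $P_M$, and then appeals to Lemma~\ref{lem:edge=sym}. Your route is purely combinatorial: you verify the basis-exchange axiom for $\BBB(M)\setminus\{b\}$ directly, repairing the single bad case $(b_1\setminus x)\cup y=b$ by swapping $y$ for any $y'\in b_2\setminus b$ and invoking the hypothesis. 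This is shorter and self-contained (no polytopes needed), at the cost of not producing the edge-preservation statement that the paper gets for free. One small point worth making explicit in your write-up is that $\BBB(M)\setminus\{b\}\neq\emptyset$, which follows from the standing no-coloop assumption.
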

\begin{proof}
$\Rightarrow$: 

Claim: for any two distinct elements $x_1,x_2\in b$ and any two distinct elements $y_1,y_2\not \in b$ if $b\setminus x_1\cup y_1$ and $b\setminus x_2\cup y_2$ are bases, then so are $b\setminus x_1\cup y_2$ and $b\setminus x_2\cup y_1$.
\begin{proof}[Proof of the claim]
If the given sets are bases in $M_b$, then we may perform symmetric basis exchange among them, extending the first one by $y_2$. There are only two possibilities: either $y_1$ is removed from the first basis, or $x_2$. The first case is exactly the statement of the claim. The second one is not possible, as then the second basis would become $b$, which is not a basis in $M_b$.
\end{proof}

As $M$ does not have loops, the claim together with (\ref{fundamental_circuits_exchange}) implies that for any two elements $y_1,y_2\not\in b$ the circuits $C_{b,y_1}$ and $C_{b,y_2}$ have the same intersection with $b$. 

It remains to be proven that $C_{b,y}\cap b=b$ for any $y\not\in b$. For this, it is enough to prove that any $x\in b$ is contained in some $C_{b,y}$. Pick a basis $b'$ that does not contain $x$. Perform a symmetric basis exchange among $b,b'$ which removes $x$ from $b$. We must have some $z\in b'$ such that $b\setminus x\cup z$ is a basis. Thus $x\in C_{b,z}$ which finishes the proof of the first implication. 

$\Leftarrow$:

By Lemma \ref{lem:edge=sym} we have to prove that each edge $e$ of $P_{M_b}$ is a shifted vector $e_i-e_j$. We will prove that each edge of $P_{M_b}$ is in fact an edge of $P_M$. Pick an edge $(v,w)$ of $P_{M_b}$. We know that $v,w$ are vertices of $P_M$. By Corollary \ref{cor:PMb=PMcapH-} $P_{M_b}=P_M\cap H_b^-$. In general, a cut of  polytope with a half space, may only introduce new edges belonging to the defining hyperplane of the cut. Thus, we easily conclude if $v$ or $w$ do not belong to $H_b$. 

The remaining case is $v,w\in H_b$. We know that $v$ and $w$ correspond respectively to bases $b\setminus p\cup q$ and $b\setminus x\cup y$. If $|\{x,y,p,q\}|<4$ then the two bases are related by symmetric basis exchange and hence $(v,w)$ is an edge of $P_M$. To finish the proof we exclude the case $|\{x,y,p,q\}|=4$. Indeed in such a case the mid point of $(v,w)$ is also the mid point of a segment joining vertices corresponding to $b\setminus p\cup y$ and $b\setminus x\cup q$. Here we are precisely using our assumption that these two sets are bases of the matroid. But an edge cannot share a point with a segment joining two other vertices. This contradiction finishes the proof. 
\end{proof}


For two multisets $\{b_1,\dots,b_k\}$ and $\{b_1',\dots,b_k'\}$  of bases we write $b_1\cdots b_k=b_1'\cdots b_k'$ if the unions of elements of both multisets are equal as a multiset. This is equivalent to $\prod_{i=1}^k y_{B_i}-\prod_{i=1}^k y_{B_i'}\in I_M$. We call such binomials degree $k$ relations.

\begin{defi}
We say that a cubic relation $b_1 b_2 b_3=b_1 b_2'b_3'$ in $M$ is of type $b$ if $b_1=b$ and $b_2b_3=b_2'b_3'$ is a symmetric basis exchange.
\end{defi}

We will prove that any relation among bases of $M_b$ can be combined from quadratic relations coming from symmetric basis exchanges in $M_b$, assuming that this is possible in $M$. First we will establish this for quadratic relations. Notice in particular that this proves that when $M$ satisfies the strong version of White's conjecture and $M_b$ satisfies the weak version of White's conjecture, then $M_b$ also satisfies the strong version of White's conjecture. Afterwards we handle all cubic relations in $M_b$. Combining both results we hence show that any relation among bases in $M_b$ is a combination of symmetric basis exchanges assuming that the same holds for $M$.
For cubic relations, we will go in two steps.
In the first step we show that when we have a sequence of symmetric basis exchanges in $M$ then we can modify it such that the basis $b$ never appears in two consecutive steps of this sequence. This means that we can avoid any relations of type $b$. Then we show how to avoid a single appearance of $b$ in such a sequence.  

\begin{lemma}
    \label{quadraticcase}
    Let $M$ be a matroid and let $b \in \BBB(M)$ be such that $M_b$ is also a matroid.
    Let $b_1b_2=bb_2'=b_1'b_2''$ be a sequence of two symmetric exchanges in $M$. Then there is a sequence of symmetric exchanges in $M_b$ that starts with $b_1b_2$ and ends in $b_1'b_2''$.
\end{lemma}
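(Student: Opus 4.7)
The plan is to parametrize the two given exchanges and perform a case analysis on how the two swaps interact. Write the first exchange as $b = b_1 \setminus \{x\} \cup \{y\}$, $b_2' = b_2 \setminus \{y\} \cup \{x\}$ (with $x \in b_1$, $y \in b_2$), and the second as $b_1' = b \setminus \{u\} \cup \{v\}$, $b_2'' = b_2' \setminus \{v\} \cup \{u\}$ (with $u \in b$, $v \in b_2'$). The case split is determined by whether $u = y$ and whether $v = x$.

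If $u = y$ or $v = x$, the composite of the two exchanges collapses to a single symmetric exchange: when $u = y$ one computes $(b_1', b_2'') = (b_1 \setminus \{x\} \cup \{v\}, b_2 \setminus \{v\} \cup \{x\})$, which is the swap $x \leftrightarrow v$ applied to $(b_1, b_2)$; when $v = x$ it is the swap $u \leftrightarrow y$. In either case the resulting bases contain an element ($v$ or $x$) that does not lie in $b$, so they are distinct from $b$ and hence in $\BBB(M_b)$; a length-one sequence in $M_b$ suffices.

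The substantive case is $u \neq y$ and $v \neq x$, where $b_1$ and $b_1'$ differ by two elements and a single exchange cannot realise the transition. I would consider three candidate intermediate pairs for length-two paths: $(c_1, c_2) = (b_1 \setminus \{x\} \cup \{v\}, b_2 \setminus \{v\} \cup \{x\})$, $(d_1, d_2) = (b_1 \setminus \{u\} \cup \{y\}, b_2 \setminus \{y\} \cup \{u\})$, and $(e_1, e_2) = (b_1 \setminus \{u\} \cup \{v\}, b_2 \setminus \{v\} \cup \{u\})$. A direct computation shows that whenever both components of one such candidate lie in $\BBB(M_b)$, the pair yields a valid two-step symmetric-exchange path in $M_b$ from $(b_1, b_2)$ through that pair to $(b_1', b_2'')$; moreover none of the six intermediates can equal $b$ (from $u \neq x, y$, $v \neq x, y$ and $x, v \notin b$, $y, u \in b$). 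Rewriting $c_1 = b \setminus \{y\} \cup \{v\}$ and $d_1 = b \setminus \{u\} \cup \{x\}$, the lemma hypothesis, which is precisely the condition that $b \setminus \{q\} \cup \{p\} \in \BBB(M)$ for every $q \in b$, $p \notin b$, immediately delivers $c_1, d_1 \in \BBB(M_b)$.

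The main obstacle is to show that at least one of $c_2 \in \BBB(M)$, $d_2 \in \BBB(M)$, or $\{e_1, e_2\} \subseteq \BBB(M)$ holds, so that one of the three candidate paths is actually available. I would combine the strong basis-exchange axiom applied to $(b_2, b_2'')$ at $e = y$ (whose two possible outputs give either $c_2, d_2 \in \BBB$ or $e_2 \in \BBB$) with a circuit-elimination argument exploiting a strong structural consequence of the lemma hypothesis: for every $z \notin b$, the set $b \cup \{z\}$ is itself a circuit, so $C_{b,z} = b \cup \{z\}$. Applying circuit elimination to the fundamental circuits $C_{b_1, y}$ and $C_{b_1, v}$ at their common element $x$ forces the resulting circuit into $(b_1 \setminus \{x\}) \cup \{y, v\} = b \cup \{v\}$, where the only circuit available is $C_{b, v} = b \cup \{v\}$ itself; this pins down rigid containment relations on $C_{b_1, y}$ and $C_{b_1, v}$, and combined with a parallel analysis on the $b_2$-side rules out the bad configuration in which $c_2, d_2 \notin \BBB$ and also $e_1 \notin \BBB$. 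Carrying this elimination through carefully is the technical heart of the argument.
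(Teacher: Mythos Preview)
Your reduction of the easy cases ($u=y$ or $v=x$) is fine, and the symmetric-exchange step that yields ``either $e_2\in\BBB$ or $c_2,d_2\in\BBB$'' is correct. The gap is in the substantive case: the trichotomy you are aiming for---that at least one of $c_2\in\BBB(M)$, $d_2\in\BBB(M)$, or $\{e_1,e_2\}\subseteq\BBB(M)$ must hold---is simply false, so the circuit-elimination argument you sketch cannot be completed.

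Here is a concrete counterexample. Take the rank-$4$ sparse paving matroid $M$ on $\{1,\dots,8\}$ whose only non-bases are $\{1,5,7,8\}$, $\{2,6,7,8\}$, $\{3,4,5,6\}$; these three sets pairwise meet in at most two elements and none is a neighbour of $b=\{1,2,3,4\}$, so $M_b$ is a matroid. With $x=5$, $y=1$, $u=2$, $v=6$ one has
\[
b_1=\{2,3,4,5\},\quad b_2=\{1,6,7,8\},\quad b_2'=\{5,6,7,8\},\quad b_1'=\{1,3,4,6\},\quad b_2''=\{2,5,7,8\},
\]
and $b_1b_2=bb_2'=b_1'b_2''$ is a valid two-step chain in $M$. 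However
\[
c_2=\{1,5,7,8\},\qquad d_2=\{2,6,7,8\},\qquad e_1=\{3,4,5,6\}
\]
are exactly the three non-bases of $M$, so all three of your candidate length-two paths fail. Your circuit-elimination gives only $b\cup\{v\}\subseteq C_{b_1,y}\cup C_{b_1,v}$, and here $C_{b_1,y}=\{1,2,3,4,5\}$ while $C_{b_1,v}=\{3,4,5,6\}$; the element $u=2$ sits in the first circuit but not the second, so no contradiction arises and $e_1\notin\BBB$ is perfectly consistent. The ``parallel analysis on the $b_2$-side'' does not help either: $C_{b_2,x}=\{1,5,7,8\}$ and $C_{b_2,u}=\{2,6,7,8\}$ share neither $y$ nor $v$.

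The paper's proof avoids this trap by not restricting to swaps inside $\{x,y,u,v\}$. It compares the fundamental circuits $C_{b_2',y}$ and $C_{b_2'',y}$; when they coincide (as they do in the example above, both equal $\{1,5,7,8\}$), it invokes the symmetric exchange axiom \emph{in $M_b$} on the pair $(b_1',b_2'')$ to locate an auxiliary element $p\in b_2''\setminus b_1'$ and then builds a three-step path through $(b_1'\cup p\setminus y)(b_2''\cup y\setminus p)$ and $(b_1\cup p\setminus x)(b_2\cup x\setminus p)$. In the example one may take $p=7$, obtaining the chain
\[
\{1,3,4,6\}\{2,5,7,8\}\;=\;\{3,4,6,7\}\{1,2,5,8\}\;=\;\{2,3,4,7\}\{1,5,6,8\}\;=\;\{2,3,4,5\}\{1,6,7,8\}.
\]
So the missing idea is twofold: allow a third step, and choose the pivot via an exchange performed in $M_b$ rather than only among the four elements already named.
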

\begin{proof}
    To fix notation let us assume $b_1=b\cup x\setminus y$ and $b_1'=b\cup z \setminus w$. Hence in the sequence $b_1b_2=bb_2'=b_1'b_2''$, the first symmetric exchange exchanges $x$ for $y$ and the second symmetric exchange exchanges $z$ for $w$. We clearly have $y,w \in b, x,z\notin b$, so $x\neq y,w$ and $z\neq y,w$. If we have $x=z$ or $y=w$ then the relation $b_1b_2=b_1'b_2''$ is already a symmetric exchange in $M_b$. Therefore we may assume from now on that $|\{x,y,z,w\}|=4$. \par
    Consider the two fundamental circuits $C_{b_2',y},C_{b_2'',y}$. We argue depending on whether they are equal or not. \begin{itemize}
        \item \underline{Case 1: $C_{b_2',y}=C_{b_2'',y}$.} Since we have $y\in b_1'\setminus b_2''$ and since $M_b$ is a matroid, we can find some element $p\in b_2''\setminus b_1'$ such that we can symmetrically exchange $y$ for $p$. If $b_2''\cup y\setminus p=b$ then $b_1b_2= b_1' b_2''$ is a symmetric basis exchange and we are done. 
        Otherwise, $b_2''\cup y\setminus p\in \BBB(M_b)$ and therefore $p\in C_{b_2'',y}=C_{b_2',y}$. This means that $p\in b_2'$ and also $b_2'\cup y \setminus z=b_2\cup x \setminus p \in \BBB(M)$. If this basis would equal $b$ it would mean that $b$ and $b_2'$ differ only by a one element. But then $bb_2'$ could not be equivalent to any other pair of bases. Thus we have  $b_2'\cup y \setminus z=b_2\cup x \setminus p \in \BBB(M_b)$.
        We therefore write the following sequence of symmetric exchanges.\begin{align*}
            b_1'b_2''=(b_1'\cup p \setminus y)(b_2''\cup y \setminus p) =(b_1\cup p \setminus x)(b_2\cup x \setminus p)= b_1b_2
        \end{align*}
        Here the second exchange is obtained by exchanging $z$ and $w$. Notice that by Lemma \ref{lem:whenwemayremoveb}, $b_1\cup p\setminus x$ is a basis of $M_b$ as it differs from $b$ only by exchanging one element. Further notice that it may happen that $p=x$, in that case the last step in the above sequence is trivial and can be skipped. We see that none of the bases appearing above can be equal to $b$. Therefore the entire sequence is a sequence of symmetric exchanges in $M_b$.
        \item \underline{Case 2: $C_{b_2',y}\neq C_{b_2'',y}$.} Note that both circuits of $M$ are also circuits of $M_b$. By the circuit axiom for the matroid $M_b$, we find a circuit \begin{align*}
            C\subseteq C_{b_2',y}\cup C_{b_2'',y}\setminus y \subseteq b_2'\cup b_2''= b_2''\cup z.
        \end{align*}
        Clearly the basis $b_2''$ can not contain a circuit, so we must have $z\in C_{b_2',y}\cup C_{b_2'',y}\setminus y$ and as $z\notin b_2''$ we conclude $z \in C_{b_2',y}$. This implies that $b_2'\cup y \setminus z\in \BBB(M_b)$. Also by Lemma \ref{lem:whenwemayremoveb} we know that $b\cup z \setminus y \in \BBB(M_b)$ as it differs from $b$ only in one element.  Note that $b_1\cup z\setminus x=b\cup z\setminus y=b_1'\cup w\setminus y$ and $b_2\cup x\setminus z=b_2'\cup y\setminus z=b_2''\cup y\setminus w$.
        We therefore finish the proof by writing \begin{align*}
            b_1b_2=(b\cup z\setminus y)(b_2'\cup y \setminus z)=b_1'b_2''
        \end{align*}
        where the first exchange is obtained by exchanging $z$ for $x$ and the second relation comes from exchanging $x$ for $w$. 
    \end{itemize}
\end{proof}
Note that the previous lemma implies that if strong White's conjecture holds for $M$, then weak White's conjecture holds for $M_b$.
We now go to cubic relations where we start by showing that type $b$ relations can be avoided. More precisely consider any cubic relation $\Tilde{b_1}\Tilde{b_2}\Tilde{b_3}=\Tilde{b_1'}\Tilde{b_2'}\Tilde{b_3'}$ in $M_b$ and assume that $M$ satisfies the strong version of White's conjecture. Then working in $M$ we find a sequence of symmetric exchanges, possibly involving the basis $b$ that transforms $\Tilde{b_1}\Tilde{b_2}\Tilde{b_3}$ into $\Tilde{b_1'}\Tilde{b_2'}\Tilde{b_3'}$. We will first show that we can always modify this sequence such that none of the appearing relations is a type $b$ relation. Assume we encounter a type $b$ relation $bb_1b_2=bb_1'b_2'$. This can happen in two different situations: If $b_1,b_2,b_1',b_2'\in \BBB(M_b)$ it will be enough to present a sequence of symmetric exchanges in $M$ that are not of type $b$ to go from $bb_1b_2$ to $bb_1'b_2'$. We do so in Lemma \ref{typebsimple}. However, if additionally some of the bases $b_1,b_1',b_2,b_2'$ are equal to $b$, the above procedure is impossible as any symmetric exchange starting in a triple $bbb'$ will necessarily be of type $b$. Therefore we will need to skip two steps at a time in this setting.
\begin{lemma}
\label{typebsimple}
Let $M$ be a matroid and let $b \in \BBB(M)$ be such that $M_b$ is also a matroid.
Consider a cubic relation of type $b$, say $bb_1b_2=bb_1'b_2'$. Assume further that $b_1,b_2,b_1',b_2'\neq b$ and $bb_1b_2$ is equivalent to product of some three bases, none of which is equal to $b$. Then the relation $bb_1b_2=bb_1'b_2'$ can be generated by symmetric basis exchanges in $M$ such that we avoid cubic relations of type $b$.
\end{lemma}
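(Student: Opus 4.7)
The plan is to construct an explicit short sequence of symmetric exchanges in $M$ from $bb_1b_2$ to $bb_1'b_2'$ in which $b$ is first removed from the triple and then reinstated, so that no individual step fixes $b$ while exchanging between two other bases. Write the given symmetric exchange as $b_1'=b_1\cup z\setminus w$ and $b_2'=b_2\cup w\setminus z$ with $w\in b_1\setminus b_2$ and $z\in b_2\setminus b_1$.

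The template is a detour of length three. I would pick $x\in b_1\setminus b$ and $y\in b\setminus b_1$, with $x\ne w$ and $y\ne z$, such that $(b,b_1)$ admits the symmetric exchange swapping $x,y$ in $M$ and in addition $b_1'\cup y\setminus x$ is a basis. Then I successively perform
\begin{enumerate}
\item the exchange of $b,b_1$ swapping $x,y$, producing $\tilde b\,\tilde b_1\,b_2$ with $\tilde b=b\cup x\setminus y$ and $\tilde b_1=b_1\cup y\setminus x$;
\item fixing $\tilde b$, the exchange of $\tilde b_1, b_2$ swapping $z,w$, producing $\tilde b\,\tilde b_1'\,b_2'$ with $\tilde b_1' = b_1'\cup y\setminus x$ (a direct computation using $b_1+b_2=b_1'+b_2'$);
\item the exchange of $\tilde b,\tilde b_1'$ swapping $y,x$, returning to $b\,b_1'\,b_2'$.
\end{enumerate}
Steps 1 and 3 actively move $b$ and step 2 fixes $\tilde b\ne b$, so none of the three steps is of type $b$.

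The main obstacle is securing a pair $(x,y)$ that makes every step valid. Steps 1 and 3 are essentially immediate from the symmetric exchange axiom for $(b,b_1)$, but step 2 requires the extra condition $x\in C_{b_1',y}$. I would establish this by a short case analysis on whether $z,w$ lie in $b$, in each case applying the exchange axiom to the pair $(b,b_1')$ to extract a candidate $x''\in b_1'\setminus b$ for a given $y\in b\setminus b_1'$; typically $x''\in b_1\setminus b$ and, by comparing $C_{b_1,y}$ with $C_{b_1',y}$ (which coincide when $w\notin C_{b_1,y}$), $x:=x''$ can be arranged to satisfy step 1 as well. The exceptional outcome $x''=z$ is handled by rerouting: interchange the roles of $b_1$ and $b_2$ and perform the initial exchange between $b$ and $b_2$, yielding an analogous three step detour through a triple of the form $\hat b\,b_1\,\hat b_2$. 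The hypothesis $bb_1b_2\equiv c_1c_2c_3$ with no $c_i=b$ enters precisely here: it rules out the pathological configurations in which neither routing produces a valid $(x,y)$, since any such configuration would combinatorially force every basis triple on the multiset $b+b_1+b_2$ to contain $b$, contradicting the hypothesis. The remaining small cases $|b_1\setminus b|=1$ or $|b\setminus b_1|=1$ can be absorbed into the same analysis.
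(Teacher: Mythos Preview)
Your three–step detour
\[
bb_1b_2 \;\to\; \tilde b\,\tilde b_1\,b_2 \;\to\; \tilde b\,\tilde b_1'\,b_2' \;\to\; bb_1'b_2'
\]
is exactly the scheme the paper uses (see the displayed chains in its Cases~1.1, 1.2 and 4), and your case split on the membership of $z,w$ in $b$ is the paper's organization as well. So the approach is the same; the issue is that the existence argument for a valid pair $(x,y)$ is only sketched, and one case is genuinely missing.

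You note that step~2 needs $x\in C_{b_1',y}$ while step~1 needs $x\in C_{b_1,y}$, and you produce $x''\in C_{b_1',y}$ via symmetric exchange on $(b,b_1')$, observing that the two fundamental circuits coincide when $w\notin C_{b_1,y}$. But you say nothing about the case $w\in C_{b_1,y}$, where the circuits differ and there is no reason your $x''$ lies in $C_{b_1,y}$. This is not a formality: the paper's Case~1.2 treats precisely this situation, and requires circuit elimination on $C_{b_1,y}\cup C_{b_1',y}\setminus\{y\}$ to force $z\in C_{b_1',y}$; it then uses a \emph{different} three–step detour (the middle step exchanges between the first and third bases, not the second and third) whose validity depends on that conclusion. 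Your template cannot absorb this case as stated.

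The two other loose ends are also substantive. Your ``rerouting'' by swapping the roles of $b_1$ and $b_2$ when $x''=z$ is asserted, not verified; a priori both reroutings can fail for the same reason. And the claim that the hypothesis $bb_1b_2\equiv c_1c_2c_3$ with no $c_i=b$ ``rules out the pathological configurations'' needs an argument: the paper shows the obstruction arises exactly when \emph{both} $b_1$ and $b_2$ differ from $b$ by a single exchange, and excludes it by a short multiset count on $b+b_1+b_2$. The final sentence about absorbing $|b_1\setminus b|=1$ into the same analysis is therefore optimistic; those are precisely the cases where the detour collapses and the extra hypothesis is needed.
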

\begin{proof}
Since $b_1b_2=b_1'b_2'$ is a symmetric basis exchange, we may find $x\in b_1\setminus b_2, y\in b_2\setminus b_1$ such that $b_1'=b_1\cup y\setminus x$ and $b_2'=b_2\cup x \setminus y$. We consider three cases which depend on the containment of $x,y$ in $b$. \begin{itemize}
\item \underline{Case 1: $x,y\notin b$.}
Notice that we can find an element $p\in b\setminus (b_1\cup b_1')$: In fact we have $b_1\cup b_1'=b_1\cup y$ and $b_1 \neq b$, so if no such $p$ existed then $b\subseteq b_1\cup y$ and $b\neq b_1$ would force $y \in b$, contradicting our case assumption. We want to swap $p$ out of $b$ to get an obstruction to the appearance of $b$ in our relation. Consider the two fundamental circuits $C_{b_1,p}, C_{b_1',p}$. We again consider two cases. \begin{itemize}
\item \underline{Case 1.1: $C_{b_1,p}= C_{b_1',p}$.} In that case using the symmetric exchange axiom we find $q \in b_1\setminus b$ such that $b\cup q\setminus p, b_1\cup p\setminus q\in \BBB(M)$. The fact that $b_1\cup p\setminus q$ is a basis is equivalent to $q\in C_{b_1,p}$ and hence by the case assumption also $q\in C_{b_1',p}$, so $b_1'\cup p\setminus q \in \BBB(M)$ as well. We can therefore write \begin{align*}
    bb_1b_2=(b\cup q\setminus p)(b_1\cup p\setminus q)b_2=(b\cup q\setminus p)(b_1'\cup p\setminus q)b_2' =bb_1'b_2'.
\end{align*}
Note that none of the appearing bases $(b_1\cup p\setminus q),(b_1'\cup p\setminus q)$ can be equal to $b$ as they contain $x$ or $y$ respectively. Hence none of the above relations are of type $b$.\\
\item \underline{Case 1.2: $C_{b_1,p}\neq C_{b_1',p}$.} First, assume that $b_1$ does not differ from $b$ only by exchanging one element. \\
By the circuit axiom we find a circuit $C\subseteq C_{b_1,p}\cup C_{b_1',p}\setminus p\subseteq b_1\cup b_1'=b_1\cup y$. As the circuit $C$ can not be contained in the basis $b_1$ we conclude $y\in C_{b_1',p}$ and hence $b_1'\cup p\setminus y \in \BBB(M)$. Also note that $b_1'\cup p\setminus y=b_1 \cup p \setminus x$. Furthermore by Lemma \ref{lem:whenwemayremoveb} and since $x,y\notin b$ we have $b\cup x\setminus p, b\cup y\setminus p \in \BBB(M)$. We therefore have relations \begin{align*}
    bb_1b_2&=(b\cup x\setminus p)(b_1\cup p \setminus x)b_2\\&=(b\cup x\setminus p)(b_1'\cup p \setminus y)b_2= (b\cup y \setminus p)(b_1'\cup p \setminus y)b_2' =bb_1'b_2'.
\end{align*}
As we assumed that $b_1$ does not differ from $b$ only by exchanging one element, we know that $(b_1\cup p \setminus x)=(b_1'\cup p \setminus y)$ is not equal to $b$. Hence the above sequence of symmetric exchanges does not contain relations of type $b$.

The case when $b_2$ does not differ from $b$ only by exchanging one element is similar. Thus we have to exclude the case where both $b_1$ and $b_2$ differ from $b$ by exactly one element. For contradiction assume that $b_1=b\cup x\setminus a$ and $b_2=b\cup y\setminus c$. By assumptions of the lemma $b b_1 b_2=b_i b_j b_k$, where $b_i,b_j,b_k$ are some basis distinct from $b$. All of these six bases must contain $b\setminus\{a,c\}$. Apart from this, in the multiset that is the sum $a,c$ must appear each with multiplicity $2$. If none of $b_i,b_j,b_k$ contain both $a,c$, then their union as multiset contains at most three elements of this type. As it must contain four, without loss of generality we have $a,c\in b_i$. But this implies $b_i=b$ which is not possible. 
\end{itemize}
\item \underline{Case 2: $x\in b, y \notin b$.} In this case $b\cup y \setminus x \in \BBB(M)$ according to Lemma \ref{lem:whenwemayremoveb}. Hence we simply write \begin{align*}
    bb_1b_2 = (b\cup y \setminus x)b_1b_2'= bb_1'b_2'
\end{align*}
where both relations are symmetric basis exchanges that are not of type $b$.
\item \underline{Case 3: $y\in b, x \notin b$.} Exchange the roles of $x,b_1,b_1'$ with $y,b_2,b_2'$ respectively in the previous case.
\item \underline{Case 4: $x,y\in b$.} Like in Case 1.2 without loss of generality assume that $b_1$ does not differ from $b$ by exchanging only one element. By the symmetric exchange property in $M$ we can find $p\in b_1\setminus b$ such that $b_1 \cup y\setminus p, b\cup p \setminus y\in \BBB(M)$. By Lemma \ref{lem:whenwemayremoveb} we also have $b\cup p \setminus y \in \BBB(M)$ and hence we write \begin{align*}
    bb_1b_2=(b\cup p\setminus y)(b_1\cup y \setminus p)b_2= (b\cup p\setminus x)(b_1\cup y \setminus p)b_2'= bb_1'b_2'.
\end{align*}
Since we assumed that $b_1$ does not differ from $b$ by just exchanging one element, we know that $b_1\cup y\setminus p=b_1'\cup x \setminus p$ is not equal to $b$. Hence these relations are symmetric basis exchanges that are not of type $b$.
\end{itemize}
\end{proof}

The next step of our proof is to avoid the single appearances of $b$ throughout the exchange sequence in $M$. This is done in the next lemma.
\begin{lemma}
\label{removingb}
Let $M$ be a matroid and let $b\in \BBB(M)$ be such that $M_b$ is also a matroid. Consider two cubic relations in $M$, given by symmetric exchanges:
\[b_1b_2b_3=b b_2'b_3=b_1'b_2'b_3'.\]
where $b_1,b_2,b_3,b_1',b_2',b_3'\in \BBB(M_b)$. Then $b_1b_2b_3=b_1'b_2'b_3'$ is generated by quadrics in the toric ideal of $M_b$.
\end{lemma}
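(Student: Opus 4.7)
The plan is to proceed by case analysis on the combinatorics of the two given symmetric exchanges and, in each case, reroute the cubic identity through a quadratic intermediate lying in $\BBB(M_b)$.

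First I would fix coordinates. Write the exchange $b_1 b_2 = b b_2'$ as the swap of $x \in b_1\setminus b_2$ with $y \in b_2\setminus b_1$, so $b = b_1\setminus x \cup y$ and $b_2' = b_2\setminus y \cup x$; note $x\notin b$ and $y\in b$. Write the exchange $b b_3 = b_1' b_3'$ as the swap of $z \in b\setminus b_3$ with $w \in b_3\setminus b$, so $b_1' = b\setminus z \cup w$ and $b_3' = b_3\setminus w \cup z$; then $z\neq x$ (since $z\in b$, $x\notin b$) and $w\neq y$. The essential coincidences to watch for are $z=y$ and $w=x$.

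Next I would split into cases by which of these coincidences hold. In the \emph{collapsed} case $z=y$, $w=x$ one finds $b_1'=b_1$, and the target identity degenerates to $b_2 b_3 = b_2' b_3'$, which is itself a symmetric exchange between two bases of $M_b$. In the \emph{decoupled} case $z\neq y$, $w\neq x$, the total element movement splits into two disjoint swaps, both involving $b_1$: $(x,y)$ between $b_1$ and $b_2$, and $(z,w)$ between $b_1$ and $b_3$. To avoid the forbidden intermediate $b=b_1\setminus x \cup y$, I would perform the swaps in the opposite order: first $b_1 b_3 \to b_1^\star b_3'$ with $b_1^\star := b_1\setminus z \cup w$, and then $b_1^\star b_2 \to b_1' b_2'$. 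Since $z\neq x$, the set $b_1^\star$ differs from $b$, so once $b_1^\star \in \BBB(M)$ is established, both steps are quadratic exchanges in $M_b$. The remaining \emph{mixed} cases $z=y,w\neq x$ and $w=x,z\neq y$ exhibit a three-cycle of element movement and are treated symmetrically by inserting an auxiliary swap between $b_2,b_3$ or between $b_1,b_3$ first.

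The main obstacle will be verifying, in each rerouting, that the auxiliary intermediate is actually a basis of $M$ distinct from $b$. For this I plan to combine three tools: the strong basis-exchange axiom applied to a suitable pair such as $(b_1,b_1')$ in the decoupled case to produce a basis differing by exactly the required single element; the circuit-elimination axiom applied to the fundamental circuits $C_{b_1,y}$, $C_{b,w}$, and $C_{b_3,z}$ to force the swapped element into the desired circuit $C_{b_1,w}$; and Lemma \ref{lem:whenwemayremoveb}, which guarantees that any single-element swap from $b$ lies in $\BBB(M_b)$ and is thus safe to use as an intermediate.

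If a subcase resists direct rerouting because both obvious auxiliary swaps are obstructed by the positions of $x$ or $w$ in the wrong bases, I would fall back on Lemma \ref{quadraticcase}: the residual two-step exchange through $b$ then matches exactly the hypothesis of that lemma and can be replaced by a sequence of quadratic exchanges entirely inside $M_b$, which combines with the already-handled half of the cubic to finish the decomposition. Assembling the outputs of the cases expresses $b_1 b_2 b_3 - b_1' b_2' b_3'$ as an explicit combination of basis monomials times quadratic symmetric-exchange binomials in the toric ideal of $M_b$, as required.
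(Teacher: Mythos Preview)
Your decoupled-case plan has a genuine gap: the intermediate $b_1^\star := b_1\setminus z\cup w$ is a \emph{specific} set, and nothing in the hypotheses forces it to be a basis of $M$. Concretely, take $M$ represented over $\QQ$ by the columns $v_1=(1,0,0)$, $v_2=(0,1,0)$, $v_3=(0,0,1)$, $v_4=(1,1,1)$, $v_5=(1,1,2)$, $v_6=(2,1,1)$, with $b=\{1,2,3\}$. All single-element swaps of $b$ are bases (every $v_i$ has nonzero entries), so $M_b$ is a matroid. With $b_1=\{2,3,4\}$, $b_2=\{1,5,6\}$, $b_3=\{4,5,6\}$, $b_1'=\{1,3,5\}$, $b_2'=\{4,5,6\}$, $b_3'=\{2,4,6\}$ one checks that $b_1b_2b_3=bb_2'b_3=b_1'b_2'b_3'$ are symmetric exchanges as in the lemma, with (in your notation) $x=4$, $y=1$, $z=2$, $w=5$, all distinct. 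But $b_1^\star=\{3,4,5\}$ is dependent, since $v_4$ and $v_5$ agree in their first two coordinates. So your rerouting $b_1b_2b_3\to b_1^\star b_2 b_3'\to b_1'b_2'b_3'$ is not available, and none of the listed tools can ``verify'' that $b_1^\star$ is a basis, because it is not. Your fallback via Lemma~\ref{quadraticcase} does not rescue this: that lemma handles a two-step quadratic chain through $b$ on the \emph{same} pair of bases, whereas here the two exchanges through $b$ live on different pairs $(b_1,b_2)$ and $(b,b_3)$.

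The paper's proof avoids this trap by never postulating a specific intermediate. Instead it applies the symmetric exchange axiom in $M_b$ to a suitable pair (for instance $b_1,b_3$, removing the element of $b$ that must leave $b_1$) to \emph{produce} some exchange partner $p$, and then branches on whether $p$ coincides with one of the distinguished elements. When it does not, one gets a clean three-step chain in $M_b$; when it does, one has gained extra information (e.g.\ a relation like $b_2\cup x\setminus p\in\BBB(M)$ or a useful equality of fundamental circuits) that enables an alternative chain. The essential missing idea in your outline is this produce-then-branch mechanism; the case split needs to be driven by what the exchange axiom hands you, not by hoping a predetermined set is independent.
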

\begin{proof}
Let $x,y,z,w$ be such that \begin{align*}
    b_1&=b\cup x \setminus y, \text{ and hence } b_2'=b_2\cup x \setminus y,\\
    b_1'&=b\cup z \setminus w, \text{ and hence } b_3'=b_3\cup w\setminus z .
\end{align*}
Notice that $x,z\notin b$ while $y,w \in b$, hence we consider four cases. \begin{itemize}
    \item \underline{Case 1: $y\neq w$.} We furthermore destinguish between $x=z$ and $x \neq z$. \begin{itemize}
        \item \underline{Case 1.1: $x\neq z$.} By the symmetric exchange property of $M_b$ we find an element $p\in b_3\setminus b_1$ such that $b_3\cup w \setminus p, b_1\cup p\setminus w\in \BBB(M_b)$. We first assume that $p\neq y$, then $p\notin b$ and therefore by Lemma \ref{lem:whenwemayremoveb} we know that $b\cup p \setminus w \in \BBB(M_b)$ as it differs from $b$ only by one element. Hence we write the following sequence of symmetric exchanges. \begin{align*}
        b_1b_2b_3=(b_1\cup p \setminus w)b_2(b_3\cup w \setminus p)= (b\cup p \setminus w)b_2'(b_3\cup w \setminus p)=b_1'b_2'b_3'
    \end{align*}
    Here we first exchanged $p, w$, then $x, y$ and finally $z, p$. None of the appearing bases equals $b$. \par
    Now let us assume that $p=y$ in the above, then exchanging $p$ and $w$ between $b_1,b_3$ yields the relation \begin{align}
    \label{leftHalf_Case1.1}
        b_1b_2b_3=(b_1\cup  y\setminus w) b_2 (b_3 \cup w \setminus y)= (b\cup x \setminus w) b_2 (b_3 \cup w \setminus y). 
    \end{align}
        The equality here follows since $b_1=b\cup x \setminus y$. \par
        Let us now look at the triple $b_1'b_2'b_3'$. We have $x\in b_2'$ by definition, furthermore $x\notin b$ and $x\neq z$ imply $x \notin b_1'$. Hence we can symmetrically exchange $x \in b_2'\setminus b_1'$ with some element $q\in b_1'\setminus b_2'$ giving us $(b_2'\cup q \setminus x),(b_1'\cup x \setminus q)\in \BBB(M_b)$. If we have $q=z$ then $(b_1'\cup x \setminus q)=(b\cup x \setminus w)$ and therefore we complete (\ref{leftHalf_Case1.1}) to \begin{align*}
            &b_1b_2b_3=(b_1\cup  y\setminus w) b_2 (b_3 \cup w \setminus y)= (b\cup x \setminus w) b_2 (b_3 \cup w \setminus y)\\&=(b\cup x \setminus w) (b_2'\cup z \setminus x)b_3'=b_1'b_2'b_3'
        \end{align*}
        where the third equality is obtained from exchanging $y,z$ between $b_2,(b_3 \cup w \setminus y)$ and no appearing basis is equal to $b$. \par
        On the other hand if $q\neq z$ then $q\in b$ and therefore the set $b\cup x \setminus q$ differs from $b$ by one symmetric exchange, making it a basis of $M_b$ due to Lemma \ref{lem:whenwemayremoveb}. We then simply write \begin{align*}
            b_1'b_2'b_3'=(b_1'\cup x \setminus q)(b_2'\cup q \setminus x)b_3'=(b\cup x \setminus q)(b_2'\cup q \setminus x)b_3=b_1b_2b_3
        \end{align*}
        where in the second equality we exchanged $w,z$ between $(b_1'\cup x \setminus q), b_3'$ and in the last equality we exchanged $q,y$ between $b_1,b_2$. Since $q\neq w$ none of the appearing bases is equal to $b$.
        \item \underline{Case 1.2: $x =z$.} We have $y \in b_1'\setminus b_2'$, so by symmetric exchange in $M_b$ pick $r\in b_2'\setminus b_1'$ such that $b_1'\cup r \setminus y, b_2'\cup y \setminus r \in \BBB(M_b)$. If $r=w$ then we are done since in this case $b_1'\cup r \setminus y=b \cup x \setminus y=b_1$ and hence we reduced to a quadratic relation. Hence let us assume $r\neq w$. Then $r\notin b$ and hence $b\cup r \setminus y\in \BBB(M_b)$ as it differs from $b$ by just one symmetric exchange. We conclude by noting that \begin{align*}
            b_1b_2b_3=(b\cup r \setminus y)(b_2'\cup y \setminus r)b_3=(b_1'\cup r\setminus y)(b_2' \cup y \setminus r)b_3'=b_1'b_2'b_3'.
        \end{align*}
    \end{itemize}
    \item \underline{Case 2: $y=w$.} If we also have $x=z$ then $b_1=b_1'$ and hence there is nothing to prove, so we may additionally assume $x\neq z$. We use the symmetric exchange property in $M_b$ to find an element $q\in b_1\setminus b_3$ such that $b_1\cup z\setminus q, b_3\cup q \setminus z\in \BBB(M_b)$. Since it could happen that $q=x$, we consider two cases.
    \begin{itemize}
        \item \underline{Case 2.1: $x=q$.} In this case after exchanging $z, q$ between $b_1,b_3$, the new triple contains the basis $b_1\cup z\setminus x$. Since by the assumption of Case 2 we also have $y=w$, we have $b_1\cup z \setminus x=b_1'$ and therefore the resulting triple after this first symmetric exchange only differs from the triple $b_1'b_2'b_3'$ by a quadratic relation. In fact this quadratic relation is simply the symmetric exchange of $x$ and $y=w$ between $b_2$ and $b_3\cup x\setminus z$.
        \item \underline{Case 2.2: $x\neq q$.} Here we note that $b\cup z\setminus q \in \BBB(M_b)$ by Lemma \ref{lem:whenwemayremoveb} and hence write \begin{align*}
            b_1b_2b_3= (b_1\cup z \setminus q)b_2(b_3\cup q \setminus z)=(b\cup z \setminus q)b_2'(b_3\cup q \setminus z)= b_1'b_2'b_3'.
        \end{align*}
        The exchanged elements are first $z,p$ then $x,y=w$ and finally $p,y=w$. None of the appearing bases can be equal to $b$ as the first base change was performed within $M_b$ and all other bases in question can be distinguished from $b$ by checking membership of $q$ and $x$.
    \end{itemize}
\end{itemize}
\end{proof}
We combine the above results to obtain the following theorem. 
\begin{thm}
\label{maintheorem}
Let $M$ be a matroid for which the strong version of White's conjecture holds and let $b \in \BBB(M)$. Assume that $M_b$ is also a matroid, then the strong version of White's conjecture holds for $M_b$.
\end{thm}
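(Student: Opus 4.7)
My plan is to show that any binomial $f \in I_{M_b}$ can be decomposed into a combination of quadratic symmetric basis exchange binomials of $M_b$. Equivalently, for any two multisets $\cC_1, \cC_2 \subset \BBB(M_b)$ of equal cardinality and with the same union as multisets of ground-set elements, I will construct a chain $\cC_1 = \cD_0, \cD_1, \ldots, \cD_\ell = \cC_2$ inside $\BBB(M_b)$ in which each $\cD_i \to \cD_{i+1}$ is a symmetric basis exchange. Because $I_{M_b}$ is obtained from $I_M$ by the elimination recorded in diagram~\eqref{inner_diagram}, $f$ lies in $I_M$, so strong White's conjecture for $M$ produces such a chain $\cC_1 = \cM_0, \cM_1, \ldots, \cM_n = \cC_2$ with entries in $\BBB(M)$ and transitions given by symmetric exchanges of $M$. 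The task then reduces to modifying this chain so that $b$ does not occur in any intermediate multiset.

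The modification proceeds in two stages, following the outline in the introduction to this section. In the first stage, I eliminate every type-$b$ step $\cM_t \to \cM_{t+1}$, i.e., every step in which $b$ is a spectator while the swap is performed between two non-$b$ bases producing two further non-$b$ bases. By Lemma~\ref{typebsimple}, such a step is rewritten as a sequence of symmetric exchanges of $M$ in which $b$ is never a spectator; crucially, the intermediate multisets of the local rewriting do not contain the replaced spectator $b$, so each application breaks up a $b$-run (a maximal consecutive interval of multisets containing $b$) and iterated applications leave only isolated occurrences of $b$. In the second stage, each remaining intermediate multiset $\cM_t$ containing $b$ is flanked by an exchange $\cM_{t-1} \to \cM_t$ introducing $b$ and an exchange $\cM_t \to \cM_{t+1}$ removing $b$. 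Lemma~\ref{removingb}, together with Lemma~\ref{quadraticcase} in the special subcase where the two exchanges swap the same pair of positions of the multiset, replaces this pair of consecutive steps by a chain of symmetric exchanges entirely in $\BBB(M_b)$. After both stages the global chain lies throughout in $\BBB(M_b)$, which is the desired decomposition.

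The main obstacle is the bookkeeping required to keep the cubic- and quadratic-degree lemmas applicable to a chain of arbitrary multiset size with possible multiplicities of $b$. Each lemma touches at most three positions of the multiset and preserves every other position verbatim, so a step in an ambient multiset descends to a cubic relation on its three active positions and the rewriting may be performed locally, with the remaining positions playing the role of a monomial that multiplies through. To ensure termination I use the lexicographic pair formed by the number of type-$b$ steps and the total number of occurrences of $b$ in intermediate multisets: Lemma~\ref{typebsimple} strictly decreases the first entry without increasing the second, while Lemma~\ref{removingb} preserves the first entry, since its replacement lies entirely in $\BBB(M_b)$ and hence introduces no new type-$b$ steps, and strictly decreases the second. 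The auxiliary hypothesis of Lemma~\ref{typebsimple} that $bb_1b_2$ is equivalent to a product of three $b$-free bases should be extracted from the $b$-free endpoints of the global chain after restricting to the three active positions of the type-$b$ step; verifying this carefully, together with the multiplicity bookkeeping, forms the most delicate part of the argument.
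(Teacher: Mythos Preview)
Your outline parallels the paper's, but the termination measure breaks once an intermediate multiset carries more than one copy of $b$. Take $\cM_t=b\,b\,c_1c_2\cdots c_{k-2}$ with the step $\cM_t\to\cM_{t+1}$ swapping $c_1,c_2$; this is type-$b$ in your sense. Applying Lemma~\ref{typebsimple} to the cubic $bc_1c_2=bc_1'c_2'$ (using one of the two spectator $b$'s) yields, in the three-step cases of that lemma, an interior cubic step that swaps two non-$b$ bases. In the ambient $k$-tuple the \emph{second} copy of $b$ is still a spectator at that interior step, so it is again type-$b$; the count of type-$b$ steps does not drop, and the total number of $b$'s in the chain has grown (each new interior $k$-tuple still contains the surviving spectator $b$). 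Neither coordinate of your lexicographic pair is forced to decrease, Stage~1 need not terminate, the ``$b$-run'' is not broken, and Stage~2 is never reached in the single-$b$ form that Lemma~\ref{removingb} requires. Your proposed verification of the auxiliary hypothesis of Lemma~\ref{typebsimple} suffers from the same problem: no fixed triple of positions keeps its ground-set multiset along a chain that mixes positions freely, so ``restricting the $b$-free endpoints to the three active positions'' does not produce a $b$-free triple with the same monomial as $bc_1c_2$.

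The paper closes exactly this gap with an outer induction on the degree $k$, which your sketch omits. One locates a maximal $b$-run $[j_0,j_1]$; since a single exchange changes the $b$-multiplicity by at most one, the $k$-tuples at $j_0$ and $j_1$ contain $b$ exactly once. Deleting that copy of $b$ from every tuple in the run gives a degree-$(k-1)$ relation between two $b$-free $(k-1)$-tuples, which the induction hypothesis rewrites entirely inside $M_b$. Re-inserting $b$ then forces every tuple in the run to carry $b$ with multiplicity exactly one, and only at this point do Lemmas~\ref{typebsimple} and~\ref{removingb} apply cleanly in the manner of your Stages~1 and~2.
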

\begin{proof}
Let us consider any relation $b_1\cdots b_k = b_1'\cdots b_k'$ in the toric ideal of $M_b$. By Lemma \ref{quadraticcase} we can assume $k\geq 3$. By the strong version of White's conjecture for $M$, let us pick bases $b_1^{(j)},\ldots,b_k^{(j)}\in \BBB(M)$, where $j=0,\ldots,n+1$ such that $b_i^{(0)}=b_i,b_i^{(n+1)}=b_i'$ and for each index $j\in 0,\ldots, n$, the two $k$-tuples $b_1^{(j)}\cdots b_k^{(j)}$ and $b_1^{(j+1)}\cdots b_k^{(j+1)}$ differ only by symmetrically exchanging two elements between two appearing bases. The proof goes by induction first on the degee $k$ and second on the number $l$ of copies of $b$ appearing in this sequence, i.e. on \begin{align*}
    l=|\{ (i,j)\in \{1,\ldots,k\}\times \{0,\ldots, n+1\} \mid b_i^{(j)}=b\}|.
\end{align*}
If $l=0$ then we are done as the sequence is already a sequence of symmetric exchanges in $M_b$ then. Otherwise let $j_0$ be the smallest index such that the $k$-tuple $b_1^{(j_0)}\cdots b_k^{(j_0)}$ contains a copy of $b$. Let $j_1$ be the largest index such that for every $j$ with $j_0\leq j \leq j_1$, the $k$-tuple $b_1^{(j)}\cdots b_k^{(j)}$ contains a copy of $b$. Since a symmetric exchange between two copies of $b$ is impossible, we conclude that the $k$-tuples at positions $j_0$ and $j_1$ of the sequence must contain exactly one copy of $b$. In particular, after removing one copy of $b$ from them we obtain a relation between two $k-1$-tuples in the toric ideal of $M_b$. We also have a sequence of symmetric exchanges in $M$ between them, simply by removing one copy of $b$ from all of the $k$-tuples between the steps $j_0$ and $j_1$. The first and last element in the sequence have degree $k-1$ and do not contain $b$, so by induction we may generate it by a chain $C$ of symmetric basis exchanges in $M_b$. Note that $C$ may have length different from $j_1-j_0$. Adding $b$ to each element of $C$, we obtain a new sequence of symmetric basis exchanges (where possibly we increased $l$) joining $b_1^{(j_0)}\cdots b_k^{(j_0)}$ and $b_1^{(j_1)}\cdots b_k^{(j_1)}$, where now $b$ appears exactly once in each multiset. To simplify notation we redefine $j_1$ as the number of steps in which $b_1^{(j_1)}\cdots b_k^{(j_1)}$ is reached, using the new chain.
\par

Next we want to establish $j_0=j_1$. If $j_1>j_0$ then every step between the $k$-tuples at positions $j_0$ and $j_1$ is obtained as a cubic type $b$ relation multiplied by a $k-3$-tuple of bases in $M_b$. We use Lemma \ref{typebsimple} to replace every such step between $j_0$ and $j_1$ by a sequence of symmetric exchanges such that no type $b$ relations appear. Again, the last step possibly increases the number of appearances of $b$ in our sequence, but many of these appearances are easier now: We know now that in the new sequence between the two $k$-tuples which were at positions $j_0$ and $j_1$ in the original sequence, every appearing $b$ is surrounded by two $k$-tuples which do not contain $b$. Hence every appearance of $b$ in that part and its two neighbouring $k$-tuples in the sequence are obtained by taking two cubic relations as in Lemma \ref{removingb} and multiplying it with a $k-3$-tuple of bases in $M_b$. Using the result of Lemma \ref{removingb} we replace each such appearance of $b$ first by a quadratic relation in $M_b$ multiplied by $k-2$ bases in $M_b$ and then using Lemma \ref{quadraticcase} we replace the quadratic relation by a sequence of symmetric exchanges in $M_b$. \par
Comparing the resulting sequence after this procedure with the original sequence, we see that we replaced the part between the steps $j_0$ and $j_1$ by a sequence generated by symmetric exchanges in $M_b$. Therefore the total number of $b$ appearing in the original sequence decreased by the amount of $b$ appearing between $j_0$ and $j_1$. We can therefore conclude by induction.
\end{proof}
As a corollary we obtain the following theorem.
\begin{thm}\label{thm:main2}
    Assume that $M$ and $M_b$ are matroids. Then the strong White's conjecture holds for $M$ if and only if it holds for $M_b$.
\end{thm}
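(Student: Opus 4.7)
The forward implication is exactly Theorem~\ref{maintheorem}. For the converse, assume strong White's conjecture holds for $M_b$, and let $J_M \subseteq I_M$ denote the ideal generated by symmetric basis exchange binomials of $M$. The plan is to show $J_M = I_M$ by proving inductively that every binomial $m_1 - m_2 \in I_M$ lies in $J_M$. The induction is double: primarily on $\deg m_1 = \deg m_2$, secondarily on the $y_b$-multiplicity in $m_1$, which by symmetry we may assume is at least that in $m_2$.

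Two easy cases dispatch most of the problem. If $\gcd(m_1, m_2)$ is nontrivial, we factor out a common $y_{b'}$ and drop the degree, which is legitimate because $I_M$ is prime. If $y_b$ divides neither $m_1$ nor $m_2$, the relation lives inside the subring $k[y_{b'}]_{b' \in \BBB(M_b)}$ and hence inside $I_{M_b}$; the assumed strong White's conjecture for $M_b$ then exhibits it as a combination of symmetric exchange binomials for $M_b$, which are also symmetric exchange binomials for $M$.

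The substantive case is $m_1 = y_b^s \prod_{i=1}^t y_{b_i}$ with $s \geq 1$, every $b_i \neq b$, and $m_2 = \prod_{j=1}^{s+t} y_{c_j}$ with every $c_j \neq b$. The heart of the argument is the following combinatorial claim: \emph{at least one $b_i$ satisfies $|b \setminus b_i| \geq 2$.} To prove it, suppose toward contradiction that every $b_i = b \setminus\{e_i\} \cup \{f_i\}$ differs from $b$ by a single exchange; then the multiset identity $s \cdot b \sqcup \bigsqcup_i b_i = \bigsqcup_j c_j$ gives $\sum_j |c_j \cap b| = r(s+t) - t$ with $r = \rk M$, while $c_j \neq b$ and $|c_j| = r$ force $|c_j \cap b| \leq r-1$, so that $\sum_j |c_j \cap b| \leq (r-1)(s+t)$. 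Comparing these yields $s \leq 0$, a contradiction.

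Given such a $b_i$, pick $e \in b \setminus b_i$ and apply the exchange axiom to get $f \in b_i \setminus b$ with both $c := b \setminus\{e\} \cup \{f\}$ and $c' := b_i \setminus\{f\} \cup \{e\}$ bases of $M$. The assumption $|b \setminus b_i| \geq 2$ prevents either $c$ or $c'$ from equaling $b$. The symmetric exchange binomial $y_b y_{b_i} - y_c y_{c'} \in J_M$ now lets us replace $m_1$ by the monomial $m_1' = y_c y_{c'} \cdot m_1 / (y_b y_{b_i})$, which has strictly fewer factors of $y_b$ and still satisfies $m_1' - m_2 \in I_M$. The secondary induction closes the argument. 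The main obstacle is the counting lemma above; once it is in hand, the rest is a straightforward manipulation of symmetric exchanges and the primality of $I_M$.
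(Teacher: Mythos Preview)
Your proof is correct and follows essentially the same route as the paper's. Both arguments reduce to the case where one side contains copies of $b$ and the other does not, then use the identical key observation: if some $b_i$ on the $b$-side satisfies $|b\setminus b_i|\ge 2$, a symmetric exchange between $b$ and $b_i$ lowers the $b$-count without reintroducing $b$; and if no such $b_i$ exists, the same counting (your $\sum_j |c_j\cap b|$ is exactly the paper's additive $\deg$) forces $s\le 0$. The only difference is packaging---the paper phrases the descent as ``pick a reachable tuple with the minimal number of $b$'s and show that minimum is zero,'' whereas you run an explicit double induction on degree and $y_b$-multiplicity---but the content is the same.
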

\begin{proof}
    The implication $\Rightarrow$ is precisely the statement of Theorem \ref{maintheorem}, thus we only have to prove $\Leftarrow$. 

    Consider $b_1 \cdots b_k= b_1'\cdots b_k'$ in $M$. We prove that this relation can be generated by symmetric basis exchanges in $M$. If none of the $2k$ bases above equals $b$ the statement just follows from the symmetric basis exchanges in $M_b$. If some $b_i=b$ and some $b_j'=b$ we may remove it from both sides and proceed by induction on $k$. Thus we may assume $b_1=\cdots=b_s=b$, $b_{s+1},\cdots, b_k, b_1',\cdots,b_k'\neq b$. 

From all $k$-tuples of bases in $M$ fix one $\tilde b_1\cdots\tilde b_k$ that can be obtained from $b_1,\cdots, b_k$ by a series of symmetric basis exchanges and contains a minimal number of $b$'s. We claim that none of $\tilde b_i$ can be equal to $b$. This will conclude the proof as then $\tilde b_1\cdots\tilde b_k =    b_1'\cdots b_k'$ can be generated by symmetric basis exchanges in $M_b$ and hence in $M$.

For contradition assume $\tilde b_1=b$. We claim that each $\tilde b_i$ either equals $b$ or differs from $b$ by at most one element. Indeed, if some $\tilde b_i$ differs by two or more elements from $b$ we can perform a symmetric basis exchange on $\tilde b_1=b$ and $\tilde b_i$, contradicting the minimality of the number of $b$'s among $\tilde b_1\cdots\tilde b_k$.

Let us introduce a degree function on bases of $M$ by $\deg (b_i):=|b_i\setminus b|$. Our previous statement is equivalent to the fact that $\deg (\tilde b_i)$ is equal to zero or one for $i=1,\dots, k$. We additively extend $\deg$ to multisets of bases. We obtain $\deg(\tilde b_1\cdots\tilde b_k)<k$. However $\deg(\tilde b_1\cdots\tilde b_k)=\deg (b_1'\cdots b_k')$ as $\tilde b_1\cdots\tilde b_k = b_1'\cdots b_k'$. Moreover, as $b_i'\neq b$ for $i=1,\dots,k$ we have $\deg(b_i')\geq 1$. Thus $\deg (b_1'\cdots b_k')\geq k$. The contradiction \[k\leq \deg (b_1'\cdots b_k')=\deg(\tilde b_1\cdots\tilde b_k)<k\]
finishes the proof. 
\end{proof}

As an application of Theorem \ref{maintheorem} we prove that any sparse paving matroid satisfies the strong version of White's conjecture. This was originally proven by Bonin in \cite{BONIN20136}. 
\begin{defi}
    Let $M=(E,\BBB(M))$ be a matroid of rank $r$. $M$ is called \emph{sparse paving} if every subset of $E$ of cardinality $r$ is either a basis or a circuit-hyperplane, i.e. a circuit and a flat in $M$.
\end{defi}
Sparse paving matroids are also those matroids that can be obtained by starting with a uniform matroid and repeatedly removing bases from it such that after every step we still have a valid matroid. The following lemma is a restatement of a well-known result in matroid theory \cite{piff1971number}, \cite[Lemma 8]{bansal2015number}. We include a proof for the sake of completeness. 
\begin{lemma}\label{lem:sparsechar}
    $M=(E,\BBB(M))$ is a sparse paving matroid of rank $r$ if and only if there are cardinality $r$ subsets $b_1,\ldots,b_n\subseteq E$ such that $\BBB(M)=\BBB(U_{E,r})\setminus \{b_1,\ldots,b_n\}$ and for each index $i=1,\ldots,n$ the set $\BBB(U_{E,r})\setminus \{b_1,\ldots,b_i\}$ is the set of bases of a matroid $M_i$.
\end{lemma}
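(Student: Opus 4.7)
The plan is to reduce both directions to Lemma \ref{lem:whenwemayremoveb}, combined with the key combinatorial observation that in any sparse paving matroid any two distinct circuit-hyperplanes $B, B'$ satisfy $|B \cap B'| \leq r - 2$. To verify this observation, I would note that if $B' = (B \setminus q) \cup p$ with $p \notin B$ and $q \in B$, then $B \setminus q$ is independent of rank $r-1$ and is contained in the rank-$(r-1)$ flat $B$, so $\operatorname{cl}(B \setminus q) = B$. Hence $p \notin \operatorname{cl}(B \setminus q)$, meaning $(B \setminus q) \cup p = B'$ is independent, contradicting that $B'$ is a circuit.

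For the implication $\Rightarrow$, I would enumerate the circuit-hyperplanes of $M$ in any order as $b_1, \ldots, b_n$ and inductively verify that each $M_i := (E, \BBB(U_{E,r}) \setminus \{b_1, \ldots, b_i\})$ is a matroid via Lemma \ref{lem:whenwemayremoveb}: the condition that every neighbor $b_i \cup p \setminus q$ of $b_i$ remain a basis of $M_{i-1}$ follows immediately from the non-adjacency observation, since none of $b_1, \ldots, b_{i-1}$ can be such a neighbor.

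For the implication $\Leftarrow$, suppose $b_1, \ldots, b_n$ are given as in the statement, with $U_{E,r} = M_0, M_1, \ldots, M_n = M$. Lemma \ref{lem:whenwemayremoveb} applied to the removal of $b_j$ from $M_{j-1}$ demands that every neighbor of $b_j$ in $U_{E,r}$ lie in $\BBB(M_{j-1})$; in particular no $b_i$ with $i < j$ can be a neighbor of $b_j$, and by symmetry the $b_i$'s are pairwise non-adjacent. Consequently every neighbor $b_i \cup p \setminus q$ of $b_i$ survives all $n$ removals and is a basis of $M$. From this I would directly conclude that each $b_i$ is a circuit-hyperplane of $M$: every $(r-1)$-subset $b_i \setminus q$ is contained in the basis $b_i \cup p \setminus q$, hence independent, making $b_i$ a circuit of rank $r-1$; and for $x \notin b_i$, the set $b_i \cup x$ contains the basis $b_i \cup x \setminus q$, so $\rk(b_i \cup x) = r$ and $b_i$ is a flat. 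Since all other $r$-subsets are bases by construction, $M$ is sparse paving.

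I expect the only real subtlety to be extracting the pairwise non-adjacency of the $b_i$'s from the iterative assumption in the $\Leftarrow$ direction; this uses the symmetry of the neighbor condition in Lemma \ref{lem:whenwemayremoveb}, applied to the later index $j$. Once non-adjacency is established, both directions reduce to routine consequences of the definitions and the structure of neighbors in the uniform matroid $U_{E,r}$.
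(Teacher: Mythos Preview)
Your proof is correct, and the forward direction matches the paper's argument almost exactly: both enumerate the non-bases, invoke Lemma~\ref{lem:whenwemayremoveb}, and derive a contradiction (or, equivalently, a direct verification) from the fact that two circuit-hyperplanes of a sparse paving matroid cannot differ by a single element. Your closure argument for that non-adjacency fact is slightly more explicit than the paper's one-line version, but the content is the same.

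Where you diverge is in the reverse direction. The paper runs an induction down the chain $U_{E,r}=M_0,M_1,\ldots,M_n=M$, showing at each step that if $M_{i-1}$ is sparse paving and $M_i=(M_{i-1})_{b_i}$ is a matroid, then $M_i$ is again sparse paving; this requires checking that every old circuit-hyperplane stays a circuit-hyperplane after the removal and that $b_i$ becomes one. You instead extract a single global consequence of the hypothesis---pairwise non-adjacency of the $b_i$---using the symmetry of the neighbor relation together with Lemma~\ref{lem:whenwemayremoveb} applied at the larger index, and then verify directly that each $b_i$ is a circuit-hyperplane of the final matroid $M$. Your route is a bit more economical: it bypasses the intermediate sparse-paving assertions entirely. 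The paper's route, on the other hand, yields the slightly stronger statement that every $M_i$ in the chain is sparse paving, which you do not need here but which is conceptually pleasant. Both arguments ultimately rest on the same two ingredients, Lemma~\ref{lem:whenwemayremoveb} and the Hamming-distance-two separation of the removed sets.
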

\begin{proof}
    First assume that $M$ is sparse paving and pick any enumeration $\{b_1,\ldots,b_n\}=\BBB(U_{E,r})\setminus \BBB(M)$ of the non-bases of $M$. Assume for contradiction the existence of a minimal index $i_0$ such that $\BBB(U_{E,r})\setminus\{b_1,\ldots,b_{i_0}\}$ fails to be the set of bases of a matroid. Then by Lemma \ref{lem:whenwemayremoveb}, there must be a smaller index $i_1<i_0$ such that $b_{i_0}$ and $b_{i_1}$ differ only by one symmetric exchange, say $b_{i_0}= b_{i_1}\cup x \setminus y$. Since $M$ is sparse paving, its cardinality $r$ non-bases must be circuit-hyperplanes. Then $r-1=|b_{i_0}\setminus y|=|b_{i_0}\cap b_{i_1}|$. However, $b_{i_0}\cap b_{i_1}$ is also a flat and $b_{i_0}$ would be a circuit that has only one element not in this flat, a contradiction. For a more general statement see \cite[Proposition 3.8]{stressedHyperplanes}. \par
    For the other direction we proceed by induction on the cardinality of $\BBB(U_{E,r})\setminus \BBB(M)$. As uniform matroids are clearly sparse paving, it is enough to show that if $M$ is sparse paving and $M_b$ is a matroid then $M_b$ is sparse paving. Let $S\subseteq E$ be a set of cardinality $r$ which is not a basis in $M_b$. If $S\neq b$ then $S$ is a non-basis of $M$ and hence a circuit and a flat in $M$. Clearly $S$ is still dependent in $M_b$ as it is not contained in any basis. Any proper subset of $S$ is contained in a basis of $M$ and hence in a basis of $M_b$ or in $b$. However a proper subset of $S$ that is contained in $b$ is also contained in some basis that differs by one symmetric exchange from $b$ as by Lemma \ref{lem:whenwemayremoveb} every such base change is a basis. Hence $S$ is a circuit in $M_b$. The set $S$ is also a flat in $M_b$ as for any element $a\in E$, $S\cup a$ contains a basis of $M$ and hence a basis of $M_b$ or $b$. By the same argument as above, if $S\cup a$ contains $b$ then it must also contain another basis of $M$. Hence we showed that any circuit-hyperplane of $M$ is a circuit-hyperplane of $M_b$. Now assume $S=b$, then clearly $S$ is a circuit in $M_b$. Also adding any element $a\in E$ to $S$, the new set $S\cup a$ contains a basis of $M_b$ by the same argument as above, so $S$ is a flat and therefore a circuit-hyperplane.
\end{proof}
\begin{ex}
Consider the uniform matroid $U_{3,7}$. By removing the bases \[\{1,2,3\}, \{3,4,5\}, \{1,5,6\}, \{2,5,7\}, \{1,4,7\}, \{3,6,7\}\] we obtain the known non-Fano matroid. By further removing $\{2,4,6\}$ we obtain the famous Fano matroid. These are sparse paving matroids.
    \end{ex}

\begin{cor}\label{thm_Bonin}
    Sparse paving matroids satisfy the strong version of White's conjecture.
\end{cor}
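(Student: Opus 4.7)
The plan is to combine the structural characterization in Lemma \ref{lem:sparsechar} with Theorem \ref{maintheorem} via a straightforward induction. First I would invoke the fact that uniform matroids $U_{E,r}$ satisfy the strong version of White's conjecture; this is classical since uniform matroids are strongly base orderable, and strongly base orderable matroids are known to satisfy the strong White's conjecture by \cite{lason2014toric}. This gives the base case.

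Next, let $M$ be a sparse paving matroid of rank $r$ on $E$. By Lemma \ref{lem:sparsechar} there exist rank-$r$ subsets $b_1,\ldots,b_n \subseteq E$ such that $\BBB(M) = \BBB(U_{E,r}) \setminus \{b_1,\ldots,b_n\}$, and for every $i = 0,1,\ldots,n$ the set $\BBB(U_{E,r}) \setminus \{b_1,\ldots,b_i\}$ is the set of bases of a matroid $M_i$ (with the convention $M_0 = U_{E,r}$ and $M_n = M$). In particular, for each $i$, the matroid $M_i$ is obtained from $M_{i-1}$ by deleting the single basis $b_i$, i.e.\ $M_i = (M_{i-1})_{b_i}$ in the notation of the paper.

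I would then argue by induction on $i$ that $M_i$ satisfies the strong version of White's conjecture. The base case $i=0$ is the uniform matroid, handled above. For the inductive step, assume $M_{i-1}$ satisfies the strong White's conjecture. Since both $M_{i-1}$ and $M_i = (M_{i-1})_{b_i}$ are matroids, Theorem \ref{maintheorem} (or equivalently the forward direction of Theorem \ref{thm:main2}) immediately gives that $M_i$ satisfies the strong White's conjecture. Taking $i = n$ yields the result for $M$.

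The proof has essentially no hard step: all the work has already been done in establishing Theorem \ref{maintheorem} and in Lemma \ref{lem:sparsechar}. The only subtlety worth flagging is that one must start the induction from a matroid known to satisfy strong White's conjecture; the natural choice is the uniform matroid, and one should cite the appropriate source (either the strongly base orderable case from \cite{lason2014toric}, or note that uniform matroids admit an easy direct verification since any two bases differ by a sequence of symmetric exchanges in a transparent way). Beyond that, the corollary is a direct unwinding of the two preceding results.
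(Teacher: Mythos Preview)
Your proposal is correct and follows essentially the same approach as the paper: invoke Lemma~\ref{lem:sparsechar} to write a sparse paving matroid as a chain of single-basis removals from a uniform matroid, and then apply Theorem~\ref{maintheorem} inductively along the chain. The only cosmetic difference is that the paper treats the base case for uniform matroids as trivial rather than citing \cite{lason2014toric}.
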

\begin{proof}
    Uniform matroids trivially satisfy the strong version of White's conjecture, so using the notation of the previous Lemma, Theorem \ref{maintheorem} inductively shows that each of the matroids $M_i$ satisfy the conjecture as well.
\end{proof}

    The proof of Lemma \ref{lem:sparsechar} shows that starting from the uniform matroid and applying main theorem \ref{thm:main2}, i.e.~we allow to remove or add one basis at a time, we exactly obtain the class of sparse paving matroids. Of course our main theorem applies not only to uniform matroids, and it would be interesting to see which classes of matroids one gets applying it to the known cases where White's conjecture is known to be true. Below we just present one example. 

    \begin{ex}
        Consider the rank four matroid $M$ on seven elements, which over $\QQ$ is represented by the following matrix. \begin{align*}
             \begin{bmatrix}
1 & 0 & 0 & 1 & 0 & 1 & 5 \\
0 & 0 & 0 & 0 & 1 & 1 & 7 \\
1 & 0 & 1 & 0 & 0 & 0 & 3 \\
1 & 1 & 0 & 0 & 0 & 0 & -3 
\end{bmatrix}  
        \end{align*} A dependent set in $M$ either has cardinality greater than four, equals $\{1,2,3,4\}$ or contains $\{4,5,6\}$. This matroid is not paving as $\{4,5,6\}$ is a circuit of cardinality smaller than rank. Further $b=\{1,2,5,7\}$ is a basis, such that $M_b$ is a matroid. This follows e.g. by Lemma \ref{lem:whenwemayremoveb} 
    \end{ex}
The following example shows when we can remove a basis from a realizable matroid. As a consequence we construct a family of realizable matroids $M$ which admit a basis $b$ such that $M_b$ is a matroid.
\begin{ex}
    Over any field $k$ consider a representable matroid of rank $r$ on $n$ elements $\{1,\ldots,n\}$ and assume that $b=\{1,\ldots,r\}$ is a basis. Then we can find a matrix $A\in k^{r\times n-r}$ such that $M$ is represented by \begin{align*}
 \left[\text{id}_{k^r} \middle| A\right] \in k^{r\times n}
    \end{align*} 
    By Lemma \ref{lem:whenwemayremoveb} the condition of $M_b$ being a matroid translates to the statement that any entry of $A$ is non-zero. When $k$ is an infinite field, then any loopless matroid $M'$ which is realizable over $k$ is also realizable by a matrix with non-zero entries as we may act on a representation matrix $A$ by a generic element of $\text{GL}(k^r)$ to remove all appearing zeroes. Hence for infinite $k$ and any rank $r$ matroid $M'$ on a ground set $E$ realizable over $k$ we may extend $M'$ by $r$ new elements $e_1,\ldots,e_r$ to get a new matroid $M$ on $E\cup\{e_1,\ldots,e_r\}$ such that $b=\{e_1,\ldots,e_r\}$ is a basis of $M$, $M\setminus \{e_1,\ldots,e_r\}=M'$ and $M_b$ is a matroid. 
\end{ex}

\bibliography{bibML}

\begin{thebibliography}{10}

\bibitem{oxley2006matroid}
James~G Oxley.
\newblock {\em Matroid theory}, volume~3.
\newblock Oxford University Press, USA, 2006.

\bibitem{jaBernd}
Mateusz Micha{\l}ek and Bernd Sturmfels.
\newblock {\em Invitation to nonlinear algebra}, volume 211, Graduate Studies
  in Mathematics.
\newblock American Mathematical Society, 2019.

\bibitem{white1980unique}
Neil~L White.
\newblock A unique exchange property for bases.
\newblock {\em Linear Algebra and its applications}, 31:81--91, 1980.

\bibitem{blasiak2008toric}
Jonah Blasiak.
\newblock The toric ideal of a graphic matroid is generated by quadrics.
\newblock {\em Combinatorica}, 28(3):283--297, 2008.

\bibitem{BONIN20136}
Joseph~E. Bonin.
\newblock Basis-exchange properties of sparse paving matroids.
\newblock {\em Advances in Applied Mathematics}, 50(1):6--15, 2013.
\newblock Special issue in honor of Geoff Whittle on his 60th birthday.

\bibitem{lason2014toric}
Micha{\l} Laso{\'n} and Mateusz Micha{\l}ek.
\newblock On the toric ideal of a matroid.
\newblock {\em Advances in Mathematics}, 259:1--12, 2014.

\bibitem{berczi2024reconfiguration}
Krist{\'o}f B{\'e}rczi, Bence M{\'a}trav{\"o}lgyi, and Tam{\'a}s Schwarcz.
\newblock Reconfiguration of basis pairs in regular matroids.
\newblock In {\em Proceedings of the 56th Annual ACM Symposium on Theory of
  Computing}, pages 1653--1664, 2024.

\bibitem{berczi2024exchange}
Krist{\'o}f B{\'e}rczi and Tam{\'a}s Schwarcz.
\newblock Exchange distance of basis pairs in split matroids.
\newblock {\em SIAM Journal on Discrete Mathematics}, 38(1):132--147, 2024.

\bibitem{lason2021toric}
Micha{\l} Laso{\'n}.
\newblock On the toric ideals of matroids of a fixed rank.
\newblock {\em Selecta Mathematica}, 27(2):18, 2021.

\bibitem{gelfand1987combinatorial}
Israel~M Gelfand, R~Mark Goresky, Robert~D MacPherson, and Vera~V Serganova.
\newblock Combinatorial geometries, convex polyhedra, and schubert cells.
\newblock {\em Advances in Mathematics}, 63(3):301--316, 1987.

\bibitem{han_kwak12}
Kangjin Han and Sijong Kwak.
\newblock Analysis on some infinite modules, inner projection, and
  applications.
\newblock {\em Trans. Amer. Math. Soc.}, 364(11):5791--5812, 2012.

\bibitem{piff1971number}
Mike~J Piff and Dominic~JA Welsh.
\newblock On the number of combinatorial geometries.
\newblock {\em Bulletin of the London Mathematical Society}, 3(1):55--56, 1971.

\bibitem{bansal2015number}
Nikhil Bansal, Rudi~A Pendavingh, and Jorn~G van~der Pol.
\newblock On the number of matroids.
\newblock {\em Combinatorica}, 35:253--277, 2015.

\bibitem{stressedHyperplanes}
Luis Ferroni, George~D Nasr, and Lorenzo Vecchi.
\newblock {Stressed Hyperplanes and Kazhdan–Lusztig Gamma-Positivity for
  Matroids}.
\newblock {\em International Mathematics Research Notices},
  2023(24):20883--20942, 10 2022.

\end{thebibliography}
\bibliographystyle{unsrt} 
\end{document}